\newcommand{\bs}[1]{\boldsymbol{#1}} 
\newcommand{\hl}[1]{\textit{#1}} 
\renewcommand{\d}[1]{\ensuremath{\operatorname{d}\!{#1}}}
\newcommand{\de}[1]{\ensuremath{\operatorname{\delta}\!{#1}}}
\newcommand{\diff}[2]{\ensuremath{\frac{\d{#1}}{\d{#2}}}}
\newcommand{\diffp}[2]{\ensuremath{\frac{\partial{#1}}{\partial{#2}}}}
\newcommand{\boundary}[1]{\partial\!{#1}}
\newcommand{\opt}[1]{\ensuremath{\bar{{#1}}}}
\newcommand{\rJ}{{\tilde{J}}} 
\newcommand{\hess}[1]{ \mbox{Hess } #1}
\newcommand{\und}{.} 
\newcommand{\mean}[1]{\ensuremath{\mathbb{E}[{#1}]}}
\newcommand{\meanBig}[1]{\ensuremath{\mathbb{E}\Big[{#1}\Big]}}
\newcommand{\var}[1]{\ensuremath{\mathbb{V}[{#1}]}}
\newcommand{\std}[1]{\ensuremath{\mathbb{S}[{#1}]}}
\newcommand{\cov}[2]{\ensuremath{\text{Cov}[{#1},{#2}]}}
\newcommand{\com}[1]{\ignorespaces}
\renewcommand{\com}[1]{{\color{blue}{<<#1>>}}} 
\newcommand{\nes}[1]{\ignorespaces}
\newcommand{\old}[1]{\ignorespaces}
\newtheorem*{remark}{Remark}
\newlength\figureheight
\newlength\figurewidth
\newcommand{\sethw}[2]{
	\setlength\figureheight{#1\textwidth}
	\setlength\figurewidth{#2\textwidth}	
}
\numberwithin{theorem}{section}
\newcommand{\TheTitle}{Robust Optimization of PDEs with Random Coefficients Using a Multilevel Monte Carlo Method} 
\newcommand{\TheShortTitle}{Robust Optimization of PDEs Using Multilevel Monte Carlo} 
\newcommand{\TheAuthors}{A. Van Barel \and S. Vandewalle}
\headers{\TheShortTitle}{\TheAuthors}
\title{{\TheTitle}\thanks{Submitted to the editors \today.
	\funding{This research was funded by project IWT/SBO EUFORIA: ``Efficient Uncertainty quantification For Optimization in Robust desgn of Industrial Applications" (IWT-140068) of the Agency for Innovation by Science and Technology, Flanders, Belgium. Andreas Van Barel is funded by a PhD fellowship of the Research Foundation - Flanders.
}}}
\author{
	Andreas Van Barel\thanks{Department of Computer Science, KU Leuven, Celestijnenlaan 200A, 3001 Heverlee, Belgium
		(\email{andreas.vanbarel@cs.kuleuven.be}, \email{stefan.vandewalle@cs.kuleuven.be}).}
	\and
	Stefan Vandewalle\footnotemark[2]
	}
\begin{document}

\maketitle

\begin{abstract}
  This paper addresses optimization problems constrained by partial differential equations with uncertain coefficients. In particular, the robust control problem and the average control problem are considered for a tracking type cost functional with an additional penalty on the variance of the state.
  The expressions for the gradient and Hessian corresponding to either problem contain expected value operators. Due to the large number of uncertainties considered in our model, we suggest to evaluate these expectations using a multilevel Monte Carlo (MLMC) method. Under mild assumptions, it is shown that this results in the gradient and Hessian corresponding to the MLMC estimator of the original cost functional. Furthermore, we show that the use of certain correlated samples yields a reduction in the total number of samples required.
  Two optimization methods are investigated: the nonlinear conjugate gradient method and the Newton method. For both, a specific algorithm is provided that dynamically decides which and how many samples should be taken in each iteration. The cost of the optimization up to some specified tolerance $\tau$ is shown to be proportional to the cost of a gradient evaluation with requested root mean square error $\tau$.
  The algorithms are tested on a model elliptic diffusion problem with lognormal diffusion coefficient. An additional nonlinear term is also considered.
\end{abstract}

\begin{keywords}
	Robust optimization, stochastic PDEs, multilevel Monte Carlo, optimal control, uncertainty, gradient, Hessian
\end{keywords}

\begin{AMS}
	35Q93, 65C05, 65K10, 49M05, 49M15
\end{AMS}

\section{Introduction}
We consider the optimization of a tracking type cost functional constrained by a partial differential equation (PDE) containing uncertain coefficients. The goal is to find an optimum that is satisfactory in a broad parameter range, and that is as insensitive as possible to parameter uncertainties. To that end we solve the so-called \hl{robust control problem}, in which the expected value of the cost functional is optimized. 
Other problem formulations that take into account the uncertainties can be found in \cite{Borzi2010, Borzi2012, Ali2016, kouri2016risk, lee2017comparison}. They differ mainly in computational cost and in the robustness of the obtained optimum.
Several techniques to solve the robust control problem have been described previously, in particular, stochastic collocation methods \cite{Rosseel2012, borzi2009VWmultigrid, tiesler2012stochastic, chen2014weighted, borzi2011pod} and stochastic Galerkin schemes \cite{Rosseel2012, lee2013}. These are based on earlier methods for simulation problems \cite{babuvska2010stochastic, babuska2004galerkin, xiu2003modeling, xiu2005high, nobile2008sparse}.
These methods are mainly used for relatively small stochastic dimensions, because the amount of collocation points increases rapidly with the dimension. Furthermore, Galerkin schemes may run into memory problems.
Many techniques sample the problem in some way and use a multigrid solver on the resulting equations. This effectively comes down to taking the same number of samples on all levels in the multigrid hierarchy. Fundamentally different is the method proposed by Kouri \cite{Kouri2014}, in which the multigrid optimization (MG/OPT) framework \cite{nash2000, lewis2005} is applied to a hierarchy of stochastic discretizations. `Finer' levels correspond to taking a larger number of sample points in the stochastic space. Finally, Newton methods have also been applied successfully to stochastic problems, see, e.g., \cite{martin2012stochastic}.

The computation of the gradient and the Hessian vector product corresponding to the robust optimization problem entail the solution of a system of PDEs with uncertain coefficients that contain expected value operators. Due to the large number of uncertainties considered in our work, we propose to evaluate these expected values using a multilevel Monte Carlo (MLMC) method. This is motivated by the recent developments in MLMC methods for the simulation of (elliptic) PDEs with uncertain coefficients \cite{cliffe2011, teckentrup2013further, giles2015}. The MLMC method reduces the computational cost by taking most samples on coarse grids, and refining the resulting estimate using fewer samples on finer grids. This idea is mainly responsible for the substantial performance increase of our method w.r.t. methods that implicitly take the same number of samples on every grid.
Recently, a MLMC method was proposed to solve the pathwise control problem \cite{Ali2016}, which consists of calculating the average of many optimal control solutions for different realizations of the PDE constraints. However, the resulting control is not guaranteed to be robust.

The method described in this paper solves the robust control problem. It retains the positive aspects of some of the previously described methods while avoiding some of the drawbacks. In particular, our method uses a different number of samples on different spatial discretization levels, it limits memory use by only storing a few samples of the state at any given time, and it reduces cost by adapting the precision (and thus the amount of samples) to the current stage of the optimization process, see also \cite{kouri2013trust}. Furthermore, the method dynamically choses the number of samples such that a solution satisfying a requested tolerance on the gradient norm of the original (unsampled) problem can be obtained. The method can also deal with an additional cost functional term for the variance on the state, as in \cite{Rosseel2012}. The method is especially suited for a large number of stochastic dimensions. 
If the samples are carefully taken, the resulting calculated gradient and Hessian are shown to be exact for some cost functional. Under mild conditions, this cost functional is equal to the one calculated using MLMC. Furthermore, we demonstrate that it is possible to have cheaper samples if correlated samples are allowed. This requires a slight extension of the classic MLMC theory. For the problems in this paper, the effect of the correlations is such that less samples are required.  


The paper is structured as follows. 
In Section 2, it is shown that the robust control formulation and the average control formulation are essentially equivalent for the tracking type cost functional with additional variance term. 
Section 3 introduces the model PDE, describes the properties of the stochastic variables and explains how they are sampled. Expressions for the gradient and Hessian are derived in Section 4. The proposed optimization methods follow the so-called reduced approach, i.e., the state is eliminated. Because the state is stochastic, the alternative would imply storing all realizations of the state in memory, which we want to avoid. 
Section 5 summarizes the existing MLMC theory and provides details on how function valued quantities of interest can be dealt with. Section 6 applies the MLMC method on the equations derived in Section 4. 
Section 7 investigates two optimization methods: the gradient based nonlinear conjugate gradient (NCG) method and the Newton method. For both, a specific algorithm is provided that dynamically decides which and how many samples should be used in each iteration. The cost of the optimization up to some specified tolerance $\tau$ on the gradient norm is shown to be proportional to the cost of a gradient evaluation with requested root mean square error (RMSE) $\tau$. 
The algorithms are tested on a model elliptic diffusion problem with lognormal diffusion coefficient in Section 8. An additional nonlinear term is also considered. 
Finally, we end with some concluding remarks in Section 9.



\section{Cost functional}
\newcommand{\normd}[1]{\ensuremath{\|{#1}\|}}
\newcommand{\Jrob}{\ensuremath{J_{\text{rob}}}}
\newcommand{\Jav}{\ensuremath{J_{\text{av}}}}
\newcommand{\stvar}{\ensuremath{k}} 
Let $(\Omega, \mathcal{A}, \mu)$ denote a probability space. The sample space $\Omega$ contains all possible realizations $\omega$ of the random influence. Its dimension is the stochastic dimension of the problem and may be infinite. \nes{as will be the case in this paper.} $\mathcal{A}$ is the set of all events (subsets of $\Omega$) and $\mu$ is a measure that maps events in $\mathcal{A}$ to probabilities in $[0,1]$. 
The expected value operator, the variance operator and the standard deviation operator of a stochastic variable $k$ are denoted as follows
\begin{align*}\mean{\stvar} = \int_\Omega \stvar \d{\mu(\omega)}, \quad\; \var{\stvar} = \mean{(\stvar - \mean{\stvar})^2} = \mean{\stvar^2} - \mean{\stvar}^2, \quad\; \std{\stvar} = \sqrt{\var{\stvar}}.\end{align*}

Assume a spatial domain $D \subset \mathbb{R}^d$ on which the state $y$, some target state $y_D$ and the control $u$ are defined.
In this paper, we consider the stochastic equivalent to the following classical deterministic goal function of tracking type
\begin{equation}
J_{\text{det}}(y,u) = \normd{y-y_D}^2 + \alpha\normd{u}^2. \label{eq:Jyu}
\end{equation}
The norm $\normd{\und}$ denotes the $L^2$-norm in $D$ induced by the classical inner product $(.,.)$ in $L^2(D)$. 
Consider now the case where, due to uncertainties in the state equations, $y$ is stochastic. The cost functional can then be made deterministic again in several ways \cite{Ali2016, Borzi2012}. The robust control problem attempts to minimize the mean of the cost functional, yielding
\begin{equation}\Jrob(y,u) = \mean{\normd{y-y_D}^2}  + \gamma\normd{\std{y}}^2+ \alpha\normd{u}^2\label{eq:Jyu_2}.\end{equation}
The term $\normd{\std{y}}^2 = \int_D \var{y} \d{x}$ was added because it is desirable to have a control for which the state is more accurately known, leading to a risk averse optimum. Note that the first term minimizes the expected distance to the target function $y_D$, which is not the same as minimizing the distance of the expected state to the target function. The latter is called the \hl{average control} cost functional
\begin{equation}\Jav(y,u) = \normd{\mean{y} - y_D}^2  + \gamma'\normd{\std{y}}^2 + \alpha\normd{u}^2\label{eq:Jyu_3}.\end{equation}
Both cost functionals can easily be shown to be convex. Moreover, we can prove that both are essentially equivalent.
\begin{theorem}[Equivalence of robust and average control] 
	\ \\
	Assume $\normd{\std{y}} \neq 0$, then $\Jrob = \Jav$ if and only if $\gamma' = 1+\gamma$. 
	\label{theorem:equivalence}
\end{theorem}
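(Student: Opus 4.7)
My plan is to show that $\Jrob$ and $\Jav$ differ only in the coefficient of the $\normd{\std{y}}^2$ term, by expanding $\mean{\normd{y-y_D}^2}$ via a Fubini-style interchange of the spatial integral and the expectation. Concretely, since $y-y_D \in L^2(D)$ pointwise in $\omega$ and $\mean{\normd{y-y_D}^2} < \infty$, the interchange yields
\begin{equation*}
\mean{\normd{y-y_D}^2} \;=\; \mean{\int_D (y-y_D)^2 \d{x}} \;=\; \int_D \mean{(y-y_D)^2} \d{x}.
\end{equation*}

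Next, I would invoke the elementary identity $\mean{X^2} = \var{X} + \mean{X}^2$ pointwise in $x \in D$ with $X = y(x,\omega) - y_D(x)$. Because $y_D$ is deterministic, $\var{y-y_D} = \var{y}$ and $\mean{y-y_D} = \mean{y} - y_D$, so that
\begin{equation*}
\int_D \mean{(y-y_D)^2} \d{x} \;=\; \int_D \var{y} \d{x} + \int_D (\mean{y} - y_D)^2 \d{x} \;=\; \normd{\std{y}}^2 + \normd{\mean{y} - y_D}^2.
\end{equation*}

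Substituting this back into the definition \eqref{eq:Jyu_2} of $\Jrob$ gives
\begin{equation*}
\Jrob(y,u) \;=\; \normd{\mean{y} - y_D}^2 + (1+\gamma)\normd{\std{y}}^2 + \alpha\normd{u}^2,
\end{equation*}
which compared with $\Jav$ in \eqref{eq:Jyu_3} shows that $\Jrob - \Jav = (1 + \gamma - \gamma')\normd{\std{y}}^2$. Under the assumption $\normd{\std{y}} \neq 0$, this difference vanishes if and only if $\gamma' = 1 + \gamma$, establishing both directions of the equivalence.

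There is no real obstacle to this argument; the only mildly delicate point is justifying the Fubini swap, which follows at once from the $L^2$-integrability assumptions implicit in having $\Jrob$ and $\Jav$ well defined. The assumption $\normd{\std{y}} \neq 0$ is only needed for the ``only if'' direction, since otherwise any value of $\gamma'$ would make the two functionals coincide.
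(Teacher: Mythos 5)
Your proof is correct and follows essentially the same route as the paper: both arguments reduce to the bias--variance decomposition $\mean{\normd{y-y_D}^2} = \normd{\mean{y}-y_D}^2 + \normd{\std{y}}^2$, the only cosmetic difference being that you apply $\mean{X^2}=\var{X}+\mean{X}^2$ pointwise in $x$ before integrating, while the paper expands the squared $L^2(D)$ norm and kills the cross term via $\mean{y-\mean{y}}=0$. Your closing remark that $\normd{\std{y}}\neq 0$ is only needed for the ``only if'' direction is a correct and slightly sharper observation than the paper makes explicit.
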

\begin{proof}
	By switching the order of integration, we have
	\begin{equation*}
	\normd{\std{y}}^2 = \int_D \sqrt{\mean{(y - \mean{y})^2}}^2\d{x} = \int_D \int_{\Omega} (y - \mean{y})^2 \d{\mu(\omega)}\d{x} = \mean{\normd{y-\mean{y}}^2}.
	\end{equation*}
	We can now write
	\begin{align*}
	\mean{\normd{y-y_D}^2} &= \mean{\normd{\mean{y} - y_D + y - \mean{y}}^2} \\
	&= \mean{\normd{\mean{y} - y_D}^2} + \mean{\normd{y - \mean{y}}^2} + \mean{2(\mean{y} - y_D, y - \mean{y})} \\
	&= \normd{\mean{y} - y_D}^2 + \normd{\std{y}}^2.
	\end{align*}
	The quantity $\mean{y} - y_D$ is deterministic. Hence, the last term drops out because $\mean{y-\mean{y}} = 0$.
	It is now clear that
	\begin{equation*}
	\Jrob(y,u) = \normd{\mean{y} - y_D}^2 + (1+\gamma)\normd{\std{y}}^2 + \alpha\normd{u}^2  = \Jav(y,u)
	\end{equation*}
	if and only if $\gamma' = 1+\gamma$. 
\end{proof}
In \cite{Rosseel2012} both robust and average control cost functionals are considered. Theorem \ref{theorem:equivalence} explains why two seemingly different problems produced the same result\footnote{\cite{Rosseel2012}: pp.~18, table 1, first and fourth problem under `unknown mean control'.}. The robust control cost functional (\ref{eq:Jyu_2}) will be denoted simply as $J$ in the remainder of this paper. 

\section{Model problem PDE constraint}
The method that we shall propose does not assume any specific PDE. However, to make matters more concrete and to simplify some expressions, we will focus our exposition on one particular model. Consider an object occupying the spatial domain $D = [0,1]^d \subset \mathbb{R}^d$ and denote its boundary by $\boundary{D}$. 
The temperature distribution on $D$ constitutes the state $y$. The control $u$ is a heat source (or sink) on $D$ which we assume to be constant in time. The heat conduction coefficient is the stochastic field $k: D\times \Omega \rightarrow \mathbb{R}:(x,\omega) \mapsto k(x,\omega)$. With Dirichlet boundary conditions, the system equations are now described by the following PDE with random coefficients:
\begin{align}
\begin{aligned}
-\nabla\cdot(k(x,\omega)\nabla y(x,\omega))& = \beta(x)u(x) && \mbox{on } D \\
y(x,\omega) &= 0 && \mbox{on } \boundary{D}.
\end{aligned}
\label{model_SPDE}
\end{align} 
The coefficient $\beta(x)$ allows to constrain the control input to a subset of $D$, by setting it to 1 if $x$ is in the subset and 0 otherwise, see e.g., \cite{troltzsch2010}.
The variables belong to the function spaces:
\vspace{-0.3cm}
\begin{equation*}
u \in L^2(D),\; y \in H^1_0(D) \otimes L^2(\Omega),\; k \in L^\infty_+(D) \otimes L^2(\Omega),\; \beta \in L^\infty(D). \label{eq:function_spaces}
\end{equation*}
The symbol $\otimes$ denotes the tensor product of Hilbert spaces.
The subscript $+$ indicates the subset of functions that are positive almost everywhere.

\subsection{Stochastic field $k$}
\newcommand{\sfield}{z}
\newcommand{\nkl}{ {n_{\text{KL}} } }
We assume a \hl{lognormal field} $k(x,\omega) \triangleq \exp(z(x,\omega))$, with $z$ a Gaussian field.  
We take $\mean{z} = 0$ and use the common assumption of an exponential covariance \cite{cliffe2011, graham2011}
\begin{equation}
C_z(x,y) = \cov{\sfield(x,\omega)}{\sfield(y,\omega)} = \sigma^2 \exp\Big(-\frac{\normd{x-y}_1}{\lambda}\Big),
\label{eq:KL_covariance_Gaussian}
\end{equation} 
with $\sigma^2$ the variance of the field and $\lambda$ the correlation length. 
Samples of $z$ can be generated starting from the Karhunen-Lo\`eve (KL) expansion \cite{loeve1946fonctions, karhunen1947ueber} of $z$:
\begin{equation}\sfield(x,\omega) = \mean{\sfield(x,\omega)} + \sum\limits_{n=1}^{\infty} \sqrt{\theta_n} \xi_n(\omega) f_n(x), \label{eq:KLexpansion}\end{equation}
see, e.g., \cite{borzi2009VWmultigrid, borzi2011pod, cliffe2011, graham2011, chen2014weighted}. The KL-expansion is the unique expansion of the above form that minimizes the total mean squared error if the expansion is truncated to a fixed finite number of terms 
\cite{ghanem2003}.
In this paper we confine ourselves to the choice $\lambda = 0.3$ and choose $\nkl = 500$ terms, capturing $94\%$ of the variance for a 2D problem. A typical realization for two values of $\sigma$ is found in Figure \ref{fig:2Dsamples}.
\begin{figure}[h]
	\centering
	\begin{subfigure}{.45\textwidth}
		\centering
		\sethw{0.5}{0.5}
%
%
\begin{tikzpicture}

\begin{axis}[%
width=\figurewidth,
height=\figureheight,
at={(0\figurewidth,0\figureheight)},
scale only axis,
point meta min=0.423231728673059,
point meta max=2.05412899839786,
axis on top,
xmin=0,
xmax=1,
xlabel style={font=\color{white!15!black}},
xlabel={$x_1$},
y dir=reverse,
ymin=0,
ymax=1,
ylabel near ticks,
ylabel style={font=\color{white!15!black}, rotate=-90},
ylabel={$x_2$},
axis background/.style={fill=white},
colormap={mymap}{[1pt] rgb(0pt)=(0.2081,0.1663,0.5292); rgb(1pt)=(0.211624,0.189781,0.577676); rgb(2pt)=(0.212252,0.213771,0.626971); rgb(3pt)=(0.2081,0.2386,0.677086); rgb(4pt)=(0.195905,0.264457,0.7279); rgb(5pt)=(0.170729,0.291938,0.779248); rgb(6pt)=(0.125271,0.324243,0.830271); rgb(7pt)=(0.0591333,0.359833,0.868333); rgb(8pt)=(0.0116952,0.38751,0.881957); rgb(9pt)=(0.00595714,0.408614,0.882843); rgb(10pt)=(0.0165143,0.4266,0.878633); rgb(11pt)=(0.0328524,0.443043,0.871957); rgb(12pt)=(0.0498143,0.458571,0.864057); rgb(13pt)=(0.0629333,0.47369,0.855438); rgb(14pt)=(0.0722667,0.488667,0.8467); rgb(15pt)=(0.0779429,0.503986,0.838371); rgb(16pt)=(0.0793476,0.520024,0.831181); rgb(17pt)=(0.0749429,0.537543,0.826271); rgb(18pt)=(0.0640571,0.556986,0.823957); rgb(19pt)=(0.0487714,0.577224,0.822829); rgb(20pt)=(0.0343429,0.596581,0.819852); rgb(21pt)=(0.0265,0.6137,0.8135); rgb(22pt)=(0.0238905,0.628662,0.803762); rgb(23pt)=(0.0230905,0.641786,0.791267); rgb(24pt)=(0.0227714,0.653486,0.776757); rgb(25pt)=(0.0266619,0.664195,0.760719); rgb(26pt)=(0.0383714,0.674271,0.743552); rgb(27pt)=(0.0589714,0.683757,0.725386); rgb(28pt)=(0.0843,0.692833,0.706167); rgb(29pt)=(0.113295,0.7015,0.685857); rgb(30pt)=(0.145271,0.709757,0.664629); rgb(31pt)=(0.180133,0.717657,0.642433); rgb(32pt)=(0.217829,0.725043,0.619262); rgb(33pt)=(0.258643,0.731714,0.595429); rgb(34pt)=(0.302171,0.737605,0.571186); rgb(35pt)=(0.348167,0.742433,0.547267); rgb(36pt)=(0.395257,0.7459,0.524443); rgb(37pt)=(0.44201,0.748081,0.503314); rgb(38pt)=(0.487124,0.749062,0.483976); rgb(39pt)=(0.530029,0.749114,0.466114); rgb(40pt)=(0.570857,0.748519,0.44939); rgb(41pt)=(0.609852,0.747314,0.433686); rgb(42pt)=(0.6473,0.7456,0.4188); rgb(43pt)=(0.683419,0.743476,0.404433); rgb(44pt)=(0.71841,0.741133,0.390476); rgb(45pt)=(0.752486,0.7384,0.376814); rgb(46pt)=(0.785843,0.735567,0.363271); rgb(47pt)=(0.818505,0.732733,0.34979); rgb(48pt)=(0.850657,0.7299,0.336029); rgb(49pt)=(0.882433,0.727433,0.3217); rgb(50pt)=(0.913933,0.725786,0.306276); rgb(51pt)=(0.944957,0.726114,0.288643); rgb(52pt)=(0.973895,0.731395,0.266648); rgb(53pt)=(0.993771,0.745457,0.240348); rgb(54pt)=(0.999043,0.765314,0.216414); rgb(55pt)=(0.995533,0.786057,0.196652); rgb(56pt)=(0.988,0.8066,0.179367); rgb(57pt)=(0.978857,0.827143,0.163314); rgb(58pt)=(0.9697,0.848138,0.147452); rgb(59pt)=(0.962586,0.870514,0.1309); rgb(60pt)=(0.958871,0.8949,0.113243); rgb(61pt)=(0.959824,0.921833,0.0948381); rgb(62pt)=(0.9661,0.951443,0.0755333); rgb(63pt)=(0.9763,0.9831,0.0538)},
colorbar
]
\addplot [forget plot] graphics [xmin=0, xmax=1, ymin=0, ymax=1] {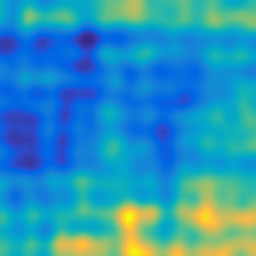};
\end{axis}
\end{tikzpicture}%
	\end{subfigure}
	\begin{subfigure}{.45\textwidth}
		\centering
		\sethw{0.5}{0.5}
%
%
\begin{tikzpicture}

\begin{axis}[%
width=\figurewidth,
height=\figureheight,
at={(0\figurewidth,0\figureheight)},
scale only axis,
point meta min=0.20878826555389,
point meta max=5.05971381552426,
axis on top,
xmin=0,
xmax=1,
xlabel style={font=\color{white!15!black}},
xlabel={$x_1$},
y dir=reverse,
ymin=0,
ymax=1,
ylabel near ticks,
ylabel style={font=\color{white!15!black}, rotate=-90},
ylabel={$x_2$},
axis background/.style={fill=white},
colormap={mymap}{[1pt] rgb(0pt)=(0.2081,0.1663,0.5292); rgb(1pt)=(0.211624,0.189781,0.577676); rgb(2pt)=(0.212252,0.213771,0.626971); rgb(3pt)=(0.2081,0.2386,0.677086); rgb(4pt)=(0.195905,0.264457,0.7279); rgb(5pt)=(0.170729,0.291938,0.779248); rgb(6pt)=(0.125271,0.324243,0.830271); rgb(7pt)=(0.0591333,0.359833,0.868333); rgb(8pt)=(0.0116952,0.38751,0.881957); rgb(9pt)=(0.00595714,0.408614,0.882843); rgb(10pt)=(0.0165143,0.4266,0.878633); rgb(11pt)=(0.0328524,0.443043,0.871957); rgb(12pt)=(0.0498143,0.458571,0.864057); rgb(13pt)=(0.0629333,0.47369,0.855438); rgb(14pt)=(0.0722667,0.488667,0.8467); rgb(15pt)=(0.0779429,0.503986,0.838371); rgb(16pt)=(0.0793476,0.520024,0.831181); rgb(17pt)=(0.0749429,0.537543,0.826271); rgb(18pt)=(0.0640571,0.556986,0.823957); rgb(19pt)=(0.0487714,0.577224,0.822829); rgb(20pt)=(0.0343429,0.596581,0.819852); rgb(21pt)=(0.0265,0.6137,0.8135); rgb(22pt)=(0.0238905,0.628662,0.803762); rgb(23pt)=(0.0230905,0.641786,0.791267); rgb(24pt)=(0.0227714,0.653486,0.776757); rgb(25pt)=(0.0266619,0.664195,0.760719); rgb(26pt)=(0.0383714,0.674271,0.743552); rgb(27pt)=(0.0589714,0.683757,0.725386); rgb(28pt)=(0.0843,0.692833,0.706167); rgb(29pt)=(0.113295,0.7015,0.685857); rgb(30pt)=(0.145271,0.709757,0.664629); rgb(31pt)=(0.180133,0.717657,0.642433); rgb(32pt)=(0.217829,0.725043,0.619262); rgb(33pt)=(0.258643,0.731714,0.595429); rgb(34pt)=(0.302171,0.737605,0.571186); rgb(35pt)=(0.348167,0.742433,0.547267); rgb(36pt)=(0.395257,0.7459,0.524443); rgb(37pt)=(0.44201,0.748081,0.503314); rgb(38pt)=(0.487124,0.749062,0.483976); rgb(39pt)=(0.530029,0.749114,0.466114); rgb(40pt)=(0.570857,0.748519,0.44939); rgb(41pt)=(0.609852,0.747314,0.433686); rgb(42pt)=(0.6473,0.7456,0.4188); rgb(43pt)=(0.683419,0.743476,0.404433); rgb(44pt)=(0.71841,0.741133,0.390476); rgb(45pt)=(0.752486,0.7384,0.376814); rgb(46pt)=(0.785843,0.735567,0.363271); rgb(47pt)=(0.818505,0.732733,0.34979); rgb(48pt)=(0.850657,0.7299,0.336029); rgb(49pt)=(0.882433,0.727433,0.3217); rgb(50pt)=(0.913933,0.725786,0.306276); rgb(51pt)=(0.944957,0.726114,0.288643); rgb(52pt)=(0.973895,0.731395,0.266648); rgb(53pt)=(0.993771,0.745457,0.240348); rgb(54pt)=(0.999043,0.765314,0.216414); rgb(55pt)=(0.995533,0.786057,0.196652); rgb(56pt)=(0.988,0.8066,0.179367); rgb(57pt)=(0.978857,0.827143,0.163314); rgb(58pt)=(0.9697,0.848138,0.147452); rgb(59pt)=(0.962586,0.870514,0.1309); rgb(60pt)=(0.958871,0.8949,0.113243); rgb(61pt)=(0.959824,0.921833,0.0948381); rgb(62pt)=(0.9661,0.951443,0.0755333); rgb(63pt)=(0.9763,0.9831,0.0538)},
colorbar
]
\addplot [forget plot] graphics [xmin=0, xmax=1, ymin=0, ymax=1] {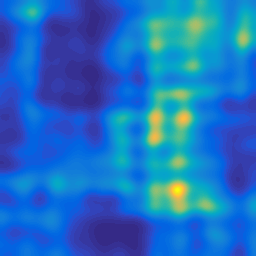};
\end{axis}
\end{tikzpicture}%
	\end{subfigure}
	\caption{Realizations of the lognormal field $k$ for the 2D case with $\lambda = 0.3$, $\nkl = 500$. Left: $\sigma^2 = 0.1$. Right: $\sigma^2 = 0.5$.}
	\label{fig:2Dsamples}
\end{figure}

The accurate and efficient generation of samples is not the main topic of this paper. Other sampling techniques such as circulant embedding \cite{dietrich1997,	graham2011} may provide some computational advantages over the KL-expansion. 

\section{Optimality Conditions}
\label{sec:lagr/stochastic case}
\newcommand{\lm}{p}
\newcommand{\ip}[1]{(#1)_{D,\Omega}}
\newcommand{\norm}[1]{\|#1\|_{D,\Omega}}
This section derives the optimality conditions for the model problem. The constraint is denoted here by $c(y,u) = \nabla \cdot (k \nabla y)+\beta u = 0$, without explicit dependence on $k$ or $\omega$. It provides a relation between $y\in H^1_0(D) \otimes L^2(\Omega)$ and $u\in L^2(D)$ for all realizations of $\omega$. All inputs $u$ are assumed to be admissible, i.e., for every $u$, $c(y,u)=0$ can be uniquely solved for $y$.

\subsection{General expressions}
The optimality conditions can be derived starting from the Lagrangian
$$\mathcal{L}(y,u,\lm) = J(y,u) + \ip{\lm,c(y,u)},$$
with $\lm \in H^1(D) \otimes L^2(\Omega)$ a Lagrange multiplier and $\ip{.,.}$ the standard inner product in $L^2(D) \otimes L^2(\Omega)$.
The necessary first order conditions for optimality are then found by setting the partial derivatives to $p$, $y$ and $u$ to zero:
\begin{equation}
\left\{
\begin{aligned}
0 &= \nabla_\lm \mathcal{L} = c(y,u) \\
0 &= \nabla_y \mathcal{L} = \nabla_y J + \Big(\diffp{c}{y}\Big)^*[\lm] \\
0 &= \nabla_u \mathcal{L} = \nabla_u J + \Big(\diffp{c}{u}\Big)^*[\lm]
\end{aligned}
\right.
\label{eq:optcond_lagr_general}
\end{equation}
The superscript $*$ denotes the adjoint of a bounded linear operator.
The expression $(\diffp{c}{y})^*[\lm]$, for example, follows through the Riesz representation theorem \cite{peypouquet2015} from
$$\diffp{}{y}\ip{\lm,c}[h]  = \ip{\lm, \diffp{c}{y} [h]} = \ip{\Big(\diffp{c}{y}\Big)^*[\lm], h}.$$

\subsection{Reduced gradient for the model problem}
The robust optimization objective can be reformulated in a more compact manner in terms of the norm $\norm{.}$ induced by $\ip{.,.}$.
\begin{align*}
J(y,u) &= \mean{\|y-y_D\|_D^2} + \gamma\|\std{y}\|_D^2 + \alpha\|u\|_D^2 \\
&= \int_{\Omega} \int_D (y-y_D)^2 \d{x} \d{\mu(\omega)} + \gamma \int_D \int_{\Omega} (y - \mean{y})^2 \d{\mu(\omega)}\d{x} + \alpha \int_D u^2 \d{x} \\
&= \norm{y-y_D}^2 + \gamma \norm{y-\mean{y}}^2 + \alpha \|u\|_D^2.
\end{align*}
\nes{It is clear that in general $\|\sqrt{\mean{(\und)^2}}\|^2_D = \norm{\und}^2 = \mean{\|\und\|^2_D}$.} 
The terms $\nabla_y J$ and $\nabla_u J$ in (\ref{eq:optcond_lagr_general}) can be evaluated by noting that
\begin{align}
\begin{aligned}
\diff{}{y} \norm{y-\mean{y}}^2 [h] & = \ip{2(y-\mean{y}), \diff{(y-\mean{y})}{y}[h]} \\
&=\ip{2(y-\mean{y}), h-\mean{h}} \\
&= \ip{2(y-\mean{y}), h}.
\end{aligned}
\label{eq:lagr_deriv_5}
\end{align}
In the last step we used that $\ip{y-\mean{y}, \mean{h}} = (\mean{y} - \mean{y}, \mean{h})_D = 0.$ 
Hence we find $\nabla_y J = 2(y-y_D) + 2\gamma(y-\mean{y})$. Similarly,  $\nabla_u J = 2\alpha u$.
Since the operator $\nabla \cdot (k \nabla .)$ is linear and self-adjoint, $\nabla_y (\lm,c) = \nabla \cdot(k \nabla \lm)$. 
Finally, for the third equation in (\ref{eq:optcond_lagr_general}) we find
\begin{equation}
\diffp{}{u} \ip{\lm,c(y,u)}[h] =\ip{\lm,\beta h} =\ip{\beta\lm,h}.\label{eq:lagr_deriv_4} 
\end{equation}
Since $u \in L^2(D)$, this equality must hold for all $h \in L^2(D)$ (as opposed to $L^2(D) \otimes L^2(\Omega)$). Since $\ip{\beta\lm,h} = (\beta \mean{p}, h)_D$, we find $\nabla_u \ip{\lm,c} = \beta \mean{p}$.
Combining the results, the system of equations (\ref{eq:optcond_lagr_general}) reduces to
\begin{equation}
\left\{\begin{array}{rcll}
-\nabla \cdot(k\nabla y)& = &\beta u &\quad \mbox{on } D\\
-\nabla \cdot(k\nabla p)& = &2(y -y_D) + 2\gamma(y-\mean{y}) &\quad \mbox{on } D\\
\nabla \rJ(u) & = & 2\alpha u + \beta \mean{p} = 0 
\end{array}\right.\label{eq:optconds_lagr}
\end{equation}
The Dirichlet boundary conditions are omitted for brevity. The last equation provides the so-called \hl{reduced gradient}, i.e., the gradient of the \hl{reduced cost functional} $\rJ(u) = J(Su,u)$ where $S$ solves the constraint $c$. 

\begin{remark}
	The $\mean{y}$ term in the second equation of (\ref{eq:optconds_lagr}) causes a more intricate connection between the values of $y$ and $p$ for the different instances governed by $\omega$. This essentially bars one from deriving the conditions for each $\omega$ separately and joining them in the third equation through an expected value, as is often done in the case $\gamma = 0$.
\end{remark}

\subsection{Reduced Hessian for the model problem} \label{sec:opt/hessian}
Consider the second order derivative of a functional $f$ and apply some calculus to obtain
\begin{align}\diff{^2f(u)}{u^2}[h_1,h_2] = \diff{}{u}\Big(\diff{f(u)}{u}[h_2]\Big)[h_1] = \diff{}{u}\Big((\nabla f(u),h_2)\Big)[h_1] = (\diff{\nabla f(u)}{u}[h_1],h_2).\label{eq:Hess}\end{align}
\old{\begin{align*}\diff{^2f(u)}{u^2}[h_1,h_2] &= \diff{}{u}\Big(\diff{f(u)}{u}[h_2]\Big)[h_1] .\end{align*}
	Some calculus yields the following results:
	\begin{align}\diff{^2f(u)}{u^2}[h_1,h_2] &= \diff{}{u}\Big((\nabla f(u),h_2)\Big)[h_1] \nonumber \\
	&= (\diff{\nabla f(u)}{u}[h_1],h_2).
	\end{align}}
The mapping $\diff{\nabla f(u)}{u}[\und], H \rightarrow H$ is the Hessian of $f(u)$, denoted as $\hess{f(u)}[\und]$ \cite{troltzsch2010}.
In the finite dimensional setting, the Hessian can be represented by an ordinary matrix $M$.  For any vector $h_1$ and $h_2$, we have $h_1^TMh_2 = (Mh_1,h_2)_{\mathbb{R}^n}$. The similarity with (\ref{eq:Hess}) should be clear. 
Due to the linearity of the equations in (\ref{eq:optconds_lagr}), working out $\diff{\nabla \rJ(u)}{u}[\de{u}]$ immediately leads to
\begin{equation}
\left\{\begin{array}{rcll}
-\nabla \cdot(k\nabla \de{y})& = &\beta \de{u} &\quad \mbox{on } D\\
-\nabla \cdot(k\nabla \de{p})& = &2\de{y} + 2\gamma(\de{y}-\mean{\de{y}}) &\quad \mbox{on } D\\
\hess{\tilde{J}(u)}[\de{u}] & = & 2\alpha \de{u} + \beta \mean{\de{p}}
\end{array}\right.\label{eq:Hess_vec_cont}
\end{equation}
The model problem is quadratic since the Hessian is independent of $u$.

\subsection{Discretization}
We assume that the discretization of the equations (\ref{eq:optconds_lagr}) leads to a system of the form
\begin{equation}
\left\{\begin{array}{rcll}
A \bs{y}& = &\beta \bs{u} \\
A' \bs{p}& = &2(\bs{y} -\bs{y_D}) + 2\gamma(\bs{y}-\mean{\bs{y}}) \\
\nabla \rJ(\bs{u}) & = & 2\alpha \bs{u} + \beta \mean{\bs{p}}
\end{array}\right. \label{eq:optconds_lagr_d}
\end{equation}
with $A, A' \in \mathbb{R}^{m^d \times m^d}$ dependent on $\omega$. This is the case if, e.g., the finite volume discretization with $m^d$ volumes is used. We use boldface to denote the finite dimensional approximations.

Consider the discretized cost functional 
\begin{equation}J(\bs{y},\bs{u}) = \mean{\|\bs{y}-\bs{y_D}\|^2}  + \gamma\|\std{\bs{y}}\|^2+ \alpha\|\bs{u}\|^2 \approx J(y,u) \label{eq:Jyu2_d}\end{equation}
where the norms and inner products over $\mathbb{R}^{m^d}$ are defined as the approximation of their continuous counterparts, i.e.,
\begin{equation}
\normd{\bs{v}}^2 \triangleq \frac{\bs{v}^T \bs{v}}{m^d} \approx \normd{v}^2 \quad \text{and} \quad 
(\bs{u}, \bs{v}) \triangleq  \frac{\bs{u}^T\bs{v}}{m^d} \approx (u,v).
\label{eq:norm_ip_d}
\end{equation}
This definition ensures that the discretized cost functional gives comparable results regardless of the number of discretization points. Using standard differentiation techniques, one can show that the discretized gradient (\ref{eq:optconds_lagr_d}) is also the exact gradient of the reduced cost functional $\rJ(\bs{u}) = J(A^{-1}\beta \bs{u}, \bs{u})$ w.r.t.\ the inner product given in (\ref{eq:norm_ip_d}). 
Discretizing the Hessian (\ref{eq:Hess_vec_cont}) in the same way is identical to taking the derivative of the discretized gradient, i.e., $\smash{\hess{\rJ(\bs{u})}[\bs{\de{u}}] = \diff{\nabla \rJ(\bs{u})}{\bs{u}}[\bs{\de{u}}]}$.


\section{Multilevel Monte Carlo}
The evaluation of the reduced gradient (\ref{eq:optconds_lagr}) or Hessian (\ref{eq:Hess_vec_cont}) requires an approximation for $\mean{p}$ and, if $\gamma \neq 0$, also for $\mean{y}$. 
Because of the PDE setting, it makes sense to consider a multilevel Monte Carlo (MLMC) estimator, which is briefly recalled following the exposition in \cite{cliffe2011}. Section \ref{section:MLMC/function} discusses in detail how we handle function valued quantities of interest. Section \ref{section:samples} analyzes the application of the method to the estimation of $\mean{p}$ in particular.
\newcommand{\QMC}{\ensuremath{\hat{Q}{}^{\text{MC}}}}
\newcommand{\Qt}{\ensuremath{Q_m}}

\subsection{Scalar-valued quantities of interest}
Assume one wishes to estimate the expected value of some \hl{quantity of interest} (QoI) $Q : \Omega \rightarrow \mathbb{R}$.
Because of approximation or discretization errors, one can often not generate exact samples $Q(\omega)$ of $Q$. Instead, one can generate samples $Q_m(\omega)$ of an approximation $Q_m$ to $Q$, where $m$ is a measure for the accuracy of the approximation. Here, we let $m$ correspond to the number of discretization points in one dimension. The numerical scheme is assumed to have a weak order of convergence equal to $\rho$, i.e.,
\begin{equation}
|\mean{Q_m - Q}| \lesssim m^{-\rho}. \label{eq:MC_disc_error}
\end{equation}
We define $a \lesssim b \Leftrightarrow a \leq cb$ with $c$ independent of $m$ and $\bs{n}$ below. We write $a \eqsim b$ iff $ a \lesssim b$ and $b \lesssim a$. 
The computational cost $\mathcal{C}(Q_m(\omega))$ for a single sample is assumed to satisfy
\begin{equation}
\mathcal{C}(Q_m(\omega)) \lesssim m^\kappa \label{eq:sample_cost}
\end{equation}
for some constant $\kappa$. Both $\rho$ and $\kappa$ depend on the algorithm employed to solve the PDE.

%

\newcommand{\YMC}{\ensuremath{\hat{Y}^\textup{MC}}}
\newcommand{\QMLMC}{\ensuremath{\hat{Q}^\textup{MLMC}}}
\newcommand{\nvec}{\{n_\ell\}}
\old{\com{mention recursive application of control variate variance reduction technique,  Applying this idea recursively yields ...}}
In the MLMC method \cite{cliffe2011, giles2015} one considers multiple approximations $Q_{m_0}, \ldots, Q_{m_L}$ for $Q$. In our setting, $m_\ell = m_0\cdot2^\ell$ corresponds to the grid size of the PDE discretization. The coarsest grid size is $m_0$, the finest is $m_L$. The method recursively estimates an expected value on a finer grid as an expected value on a coarser grid (acting as a control variate) combined with a corrective term. This leads to a telescopic sum decomposition
\begin{equation*}
\mean{Q_{m_L}} = \mean{Q_{m_0}} + \sum\limits_{\ell=1}^{L}\mean{Q_{m_\ell} - Q_{m_{\ell-1}}} = \sum\limits_{\ell=0}^{L}\mean{Y_\ell}
\end{equation*}
where $Y_l \triangleq Q_{m_\ell} - Q_{m_{\ell-1}}$ and $Q_{m_{-1}} \triangleq 0$. On level $\ell$, $\mean{Y_\ell}$ is estimated using the ordinary Monte Carlo (MC) method with $n_\ell$ samples, yielding
\begin{equation}
\YMC_{\ell, n_\ell} \triangleq \frac{1}{n_\ell}\sum\limits_{i=1}^{n_\ell} Y_\ell(\omega_i) = \frac{1}{n_\ell}\sum\limits_{i=1}^{n_\ell} \Big(Q_{m_\ell}(\omega_i) - Q_{m_{\ell-1}}(\omega_i)\Big).\label{eq:MC_def1Y}
\end{equation}
It is important to use the same stochastic realization $\omega_i$ on both levels for each sample of $Y_\ell$ to ensure a high correlation. The MLMC estimator is then defined as
\begin{equation}
\QMLMC_{\bs{m},\bs{n}} \triangleq \sum\limits_{\ell=0}^{L}\YMC_{\ell,n_\ell} \label{eq:MLMC_def2Q}
\end{equation}
with the vector $\bs{m} = \{m_\ell\}_{\ell = 0}^L$ and $\bs{n} = \nvec_{\ell = 0}^L$.
The linearity of the expected value operator and the fact that all the expectations are estimated independently, lead to
\begin{align}
\mean{\QMLMC_{\bs{m},\bs{n}}} = \mean{Q_{m_L}} && \var{\QMLMC_{\bs{m},\bs{\bs{n}}}} = \sum\limits_{\ell=0}^{L}n_\ell^{-1}\var{Y_\ell}. \label{eq:MLMC_meanvar}
\end{align}
Moreover, the mean square error (MSE) of $\QMLMC_{\bs{m},\bs{n}}$ as an estimator for $\mean{Q}$ can be characterized, see \cite{cliffe2011}, as follows
\begin{align}
\meanBig{\big(\QMLMC_{\bs{m},\bs{n}} - \mean{Q}\big)^2} 
&= \var{\QMLMC_{\bs{m},\bs{n}}} + \big(\mean{\QMLMC_{\bs{m},\bs{n}}} - \mean{Q}\big)^2 \nonumber\\
&= \sum\limits_{\ell=0}^{L}n_\ell^{-1}\var{Y_\ell}
\;\;+\;
\mean{Q_{m_L} - Q}^2.
\label{eq:MLMC_error}
\end{align}
The first term is due to the stochastic error, which can be decreased by taking more samples. The second term is due to the discretization error, equal to the bias squared. It can be decreased by solving the PDE on a finer grid, i.e., by increasing $L$. 
In order to have a RMSE of at most $\epsilon$ it is sufficient if both\footnote{In practice, often a larger part of the RMSE is allocated to the stochastic error.} terms are smaller than $\epsilon^2/2$. 

Many possibilities exist for $\bs{n}$ to achieve a stochastic error smaller than $\epsilon^2/2$. This freedom can be used to minimize the cost of the MLMC estimator. 
Denote the cost of taking a sample of $Y_\ell$ as $\mathcal{C}_l \triangleq \mathcal{C}(Y_\ell(\omega))$. The cost of the MLMC estimator is then
$\mathcal{C}(\QMLMC_{\bs{m},\bs{n}}) = \sum_{\ell = 0}^{L} n_\ell \mathcal{C}_\ell.
$
Minimizing this cost subject to the constraint $\sum_{\ell=0}^{L}n_\ell^{-1}\var{Y_\ell} = {\epsilon^2}/{2}$
yields an optimization problem which is easily solved using Lagrange multipliers. The solution, rounded upward, yields the optimal number of samples
\begin{equation}
n_\ell = \Bigg\lceil \frac{2}{\epsilon^2} \sqrt{\var{Y_\ell}\mathcal{C}_\ell^{-1}} \sum\limits_{i=0}^{L} \sqrt{\var{Y_i}\mathcal{C}_i }\Bigg\rceil. \label{eq:MLMC_n}
\end{equation}
Substituting (\ref{eq:MLMC_n}), before rounding upward, into the expression for the cost, yields
\begin{align*}
\mathcal{C}(\QMLMC_{\bs{m},\bs{n}}) = \frac{2}{\epsilon^2} \Bigg(\sum\limits_{\ell = 0}^{L} \sqrt{\var{Y_\ell}\mathcal{C}_\ell}\Bigg)^2.
\end{align*}
\nes{ 
	The values $\var{Y_\ell}$ depend on the difference in accuracy between levels $\ell$ and $\ell - 1$. This difference is usually larger if $m_\ell$ and $m_{\ell -1}$ are further apart. Optimizing for $\bs{n}$ and $\bs{m}$ simultaneously would be very difficult and problem dependent. In the remainder of this thesis we use $m_\ell = 2^\ell m_0$, because this choice is most convenient in the PDE context. Nevertheless, we can not rule out the existence of a set of grid sizes $\bs{m}$ that performs better.
} 
If $\var{Y_\ell}$ decreases faster than $\mathcal{C}_\ell$ increases with increasing $\ell$, the dominant cost is on the coarsest level $\ell = 0$ and is proportional to $\var{Y_0}\mathcal{C}_0$.  The cost savings compared to the standard MC method are then proportional to $\mathcal{C}_0/\mathcal{C}_L \eqsim (m_0/m_L)^\kappa \eqsim \epsilon^{\kappa/\rho}$.
If the converse is true, then the dominant cost is on the finest level $L$ and proportional to $\var{Y_L}\mathcal{C}_L$. The cost savings are then approximately $\var{Y_L}/\var{Y_0}$. 
\begin{remark}
	Note that $m_0$ cannot be made arbitrarily small. If the discretization is too coarse, the relevant features of the PDE solution can no longer be resolved. The resemblance between the coarse and fine level solution will then be lost, 
	i.e., $\var{Y_1}$ will no longer be smaller than $\var{Q_{m_1}}$
	It is then cheaper to estimate $\mean{Q_{m_1}}$ directly, which is equivalent to increasing $m_0$.
\end{remark}

Collecting all of the assumptions and quantifying the decay of $\var{Y_\ell}$ yields the MLMC cost theorem as presented and proven in $\cite{cliffe2011}$:
\begin{theorem}[Multilevel Monte Carlo cost]
	Suppose that there are positive constants $\rho, \phi, \kappa > 0$ such that $\rho > \frac{1}{2} \min (\phi, \kappa)$ and
	\begin{align*}
	|\mean{Q_{m_\ell} - Q}| \lesssim m_\ell^{-\rho} &&
	\var{Y_\ell} \lesssim m_\ell^{-\phi} &&
	\mathcal{C}_\ell \lesssim m_\ell^\kappa
	\end{align*}
	Then, for any $\epsilon < e^{-1}$, there exist a value $L$ and a sequence $\bs{n} = \{n_\ell\}_{\ell = 0}^L$ such that the MSE
	\begin{equation*}
	\meanBig{\big(\QMLMC_{\bs{m},\bs{n}} - \mean{Q}\big)^2} < \epsilon^2
	\end{equation*}
	and the cost
	\begin{equation}
	\mathcal{C}(\QMLMC_{\bs{m},\bs{n}}) \lesssim 
	\left\{\begin{array}{ll}
	\epsilon^{-2} &\text{if } \phi > \kappa \\
	\epsilon^{-2}(\log \epsilon)^2 &\text{if } \phi = \kappa \\
	\epsilon^{-2-(\kappa-\phi)/\rho} &\text{if } \phi < \kappa \\
	\end{array}\right.
	\label{eq:MLMC_cost}
	\end{equation}
	\label{theorem:MLMC}
\end{theorem}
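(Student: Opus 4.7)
The proof strategy is to split the MSE expression (\ref{eq:MLMC_error}) into its bias and variance contributions, pin each below $\epsilon^2/2$ separately, and then check that the optimal sample allocation (\ref{eq:MLMC_n}) yields the claimed asymptotic cost. The hypothesis $\rho > \tfrac{1}{2}\min(\phi,\kappa)$ will only enter at the very end, to ensure that a round-off remainder is absorbed into the dominant term.

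First I would pin down the number of levels. Since $\mean{Q_{m_L}-Q}^2 \lesssim m_L^{-2\rho}$, requiring the squared bias to be below $\epsilon^2/2$ forces $m_L \gtrsim \epsilon^{-1/\rho}$, and with $m_\ell = 2^\ell m_0$ the smallest such $L$ is $L = \bigl\lceil \rho^{-1}\log_2(\epsilon^{-1}) + O(1)\bigr\rceil$, so $m_L \eqsim \epsilon^{-1/\rho}$. The hypothesis $\epsilon < e^{-1}$ is only needed here, to guarantee $L \geq 1$ and to keep the logarithmic factor appearing below bounded away from zero.

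With $L$ fixed I would choose $\bs{n}$ via (\ref{eq:MLMC_n}) so that $\sum_\ell n_\ell^{-1}\var{Y_\ell} = \epsilon^2/2$, and split the cost as
\begin{equation*}
\mathcal{C}(\QMLMC_{\bs{m},\bs{n}}) = \sum_{\ell=0}^L n_\ell \mathcal{C}_\ell \leq \frac{2}{\epsilon^2}\Bigl(\sum_{\ell=0}^L \sqrt{\var{Y_\ell}\mathcal{C}_\ell}\,\Bigr)^{\!2} + \sum_{\ell=0}^L \mathcal{C}_\ell,
\end{equation*}
the last sum collecting at most one additional sample per level from the ceiling in (\ref{eq:MLMC_n}). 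Substituting $\var{Y_\ell}\mathcal{C}_\ell \lesssim m_\ell^{\kappa-\phi}$ reduces the first contribution to a geometric series with ratio $2^{(\kappa-\phi)/2}$, which I would handle by cases: if $\phi > \kappa$ the series is summable uniformly in $L$, giving $\epsilon^{-2}$; if $\phi = \kappa$ each summand is a constant and the sum grows like $L+1$, so the first term is $\epsilon^{-2}L^2 \eqsim \epsilon^{-2}(\log\epsilon)^2$; if $\phi < \kappa$ the last summand dominates, $m_L^{\kappa-\phi} \eqsim \epsilon^{-(\kappa-\phi)/\rho}$, giving $\epsilon^{-2-(\kappa-\phi)/\rho}$.

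The main obstacle, and the step where the hypothesis on $\rho$ actually enters, is verifying that the rounding remainder $\sum_\ell \mathcal{C}_\ell \lesssim m_L^\kappa \eqsim \epsilon^{-\kappa/\rho}$ does not worsen the bound in any of the three regimes. When $\phi \geq \kappa$ one needs $\kappa/\rho \leq 2$; when $\phi < \kappa$ one needs $\kappa/\rho \leq 2 + (\kappa-\phi)/\rho$, i.e.\ $\phi/\rho \leq 2$. Both collapse to $\rho \geq \tfrac{1}{2}\min(\phi,\kappa)$, which the hypothesis supplies with strict inequality, leaving enough slack to absorb the constants hidden behind $\lesssim$. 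Once this check is carried out, one reads off the three cost regimes in (\ref{eq:MLMC_cost}).
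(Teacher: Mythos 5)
Your proposal is correct and is essentially the standard complexity argument of Cliffe, Giles, Scheichl and Teckentrup, which is exactly what the paper relies on: the theorem is stated here without proof and deferred to \cite{cliffe2011}. Your decomposition (bias fixes $L$ with $m_L \eqsim \epsilon^{-1/\rho}$, the Lagrange-optimal $\bs{n}$ gives the $\frac{2}{\epsilon^2}(\sum_\ell\sqrt{\var{Y_\ell}\mathcal{C}_\ell})^2$ term, the ceiling contributes $\sum_\ell \mathcal{C}_\ell$, and the three regimes follow from the geometric series with ratio $2^{(\kappa-\phi)/2}$) matches that reference, and you correctly locate the sole role of the hypothesis $\rho > \frac{1}{2}\min(\phi,\kappa)$ in absorbing the rounding remainder $\epsilon^{-\kappa/\rho}$.
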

In practice, the problem dependent parameters $\rho, \phi$ and $\kappa$ are not always known in advance and may have to be estimated. Furthermore, $L$ has to be selected carefully in order to have a sufficiently small bias term.

\subsection{Function valued quantities of interest}
\label{section:MLMC/function}
The main quantities of interest in this paper are the gradient and the Hessian vector product, which in our application are functions instead of scalar values. These functions are discretized differently on the different levels and have to be combined in the course of the estimation algorithm. Hence, it is necessary to define a mapping between those discretizations. 

\subsubsection{Mapping between different levels}
\renewcommand{\Qt}{\ensuremath{\bs{Q}_m}}
\renewcommand{\YMC}{\ensuremath{\hat{\bs{Y}}{}^\text{MC}}}
\renewcommand{\QMLMC}{\ensuremath{\hat{\bs{Q}}{}^\text{MLMC}}}
\newcommand{\chlvl}[2]{\smash{I_{#1}^{#2}}}
Consider a linear transform $I_{\ell_1}^{\ell_2} : \mathbb{R}^{m_{\ell_1}^d} \rightarrow \mathbb{R}^{m_{\ell_2}^d} : \bs{v} \mapsto \chlvl{\ell_1}{\ell_2}\bs{v}$ that maps vectors from level $\ell_1$ to level $\ell_2$. 
The operator is a \textit{prolongation} if $\ell_1 < \ell_2$, a \textit{restriction} if $\ell_1 > \ell_2$ and the identity if $\ell_1 = \ell_2$. For any $\ell_1,\ell_2 \in \mathbb{N}$ with $\ell_1<\ell_2$, we shall require that  
\begin{equation}
\chlvl{\ell_1}{\ell_2} = \chlvl{\ell_{2}-1}{\ell_2}\chlvl{\ell_{2}-2}{\ell_{2}-1} \cdots \chlvl{\ell_{1}}{\ell_{1}+1} 
\quad \text{and} \quad 
\chlvl{\ell_1}{\ell_2} = c^{\ell_2 - \ell_1} (\chlvl{\ell_2}{\ell_1})^T
\label{eq:chlvl_condition}
\end{equation}
for some constant $c$. In our case, $c=2^d$. An analogous expression should hold if $\ell_{1}>\ell_{2}$. 
The precise definition of $\chlvl{\ell_1}{\ell_2}$ depends on the discretization method and the selection of the mesh points at the different levels. The ideas in this paper do not depend on a specific method used to solve PDE (\ref{model_SPDE}) for a single realization \old{instance of} $\omega$. Here the \hl{finite volume method} will be used to obtain function values at \hl{control volume} centers. None of the nodes existing at a level $\ell$ are then present at the level $\ell+1$, see Figure \ref{fig:grids}. 
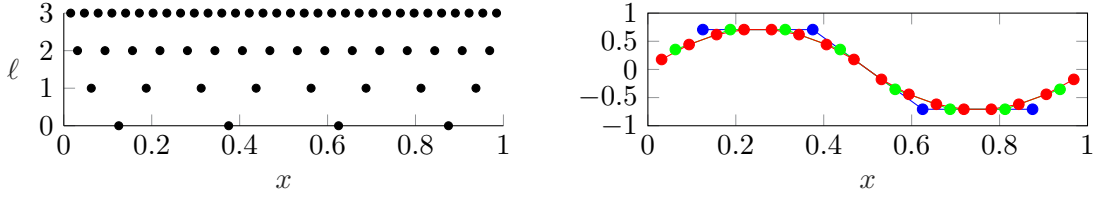
\begin{figure}[h]
	\centering	
	\sethw{0.1}{0.39}
%
%
\begin{tikzpicture}

\begin{axis}[%
width=\figurewidth,
height=\figureheight,
at={(0\figurewidth,0\figureheight)},
scale only axis,
xmin=0,
xmax=1,
xlabel style={font=\color{white!15!black}},
xlabel={$x$},
ymin=0,
ymax=3,
ylabel near ticks,
ylabel style={font=\color{white!15!black}, rotate=-90},
ylabel={$\ell$},
axis background/.style={fill=white},
axis x line*=bottom,
axis y line*=left
]
\addplot[only marks, mark=*, mark options={}, mark size=1.5000pt, color=black, fill=black] table[row sep=crcr]{%
x	y\\
0.125	0\\
0.375	0\\
0.625	0\\
0.875	0\\
};
\addplot[only marks, mark=*, mark options={}, mark size=1.5000pt, color=black, fill=black] table[row sep=crcr]{%
x	y\\
0.0625	1\\
0.1875	1\\
0.3125	1\\
0.4375	1\\
0.5625	1\\
0.6875	1\\
0.8125	1\\
0.9375	1\\
};
\addplot[only marks, mark=*, mark options={}, mark size=1.5000pt, color=black, fill=black] table[row sep=crcr]{%
x	y\\
0.03125	2\\
0.09375	2\\
0.15625	2\\
0.21875	2\\
0.28125	2\\
0.34375	2\\
0.40625	2\\
0.46875	2\\
0.53125	2\\
0.59375	2\\
0.65625	2\\
0.71875	2\\
0.78125	2\\
0.84375	2\\
0.90625	2\\
0.96875	2\\
};
\addplot[only marks, mark=*, mark options={}, mark size=1.5000pt, color=black, fill=black] table[row sep=crcr]{%
x	y\\
0.015625	3\\
0.046875	3\\
0.078125	3\\
0.109375	3\\
0.140625	3\\
0.171875	3\\
0.203125	3\\
0.234375	3\\
0.265625	3\\
0.296875	3\\
0.328125	3\\
0.359375	3\\
0.390625	3\\
0.421875	3\\
0.453125	3\\
0.484375	3\\
0.515625	3\\
0.546875	3\\
0.578125	3\\
0.609375	3\\
0.640625	3\\
0.671875	3\\
0.703125	3\\
0.734375	3\\
0.765625	3\\
0.796875	3\\
0.828125	3\\
0.859375	3\\
0.890625	3\\
0.921875	3\\
0.953125	3\\
0.984375	3\\
};
\end{axis}
\end{tikzpicture}%
	\sethw{0.1}{0.39}
%
%
\begin{tikzpicture}

\begin{axis}[%
width=\figurewidth,
height=\figureheight,
at={(0\figurewidth,0\figureheight)},
scale only axis,
xmin=0,
xmax=1,
ymin=-1,
ymax=1,
xlabel style={font=\color{white!15!black}},
xlabel={$x$},
ylabel near ticks,
axis background/.style={fill=white},
]
\addplot [color=blue, mark=*, mark options={solid, fill=blue, blue}]
  table[row sep=crcr]{%
0.125	0.707106781186547\\
0.375	0.707106781186548\\
0.625	-0.707106781186547\\
0.875	-0.707106781186548\\
};

\addplot [color=green, mark=*, mark options={solid, fill=green, green}]
  table[row sep=crcr]{%
0.0625	0.353553390593274\\
0.1875	0.707106781186547\\
0.3125	0.707106781186548\\
0.4375	0.353553390593274\\
0.5625	-0.353553390593274\\
0.6875	-0.707106781186547\\
0.8125	-0.707106781186548\\
0.9375	-0.353553390593274\\
};

\addplot [color=red, mark=*, mark options={solid, fill=red, red}]
  table[row sep=crcr]{%
0.03125	0.176776695296637\\
0.09375	0.441941738241592\\
0.15625	0.618718433538229\\
0.21875	0.707106781186547\\
0.28125	0.707106781186548\\
0.34375	0.618718433538229\\
0.40625	0.441941738241592\\
0.46875	0.176776695296637\\
0.53125	-0.176776695296637\\
0.59375	-0.441941738241592\\
0.65625	-0.618718433538229\\
0.71875	-0.707106781186547\\
0.78125	-0.707106781186548\\
0.84375	-0.618718433538229\\
0.90625	-0.441941738241592\\
0.96875	-0.176776695296637\\
};

\end{axis}
\end{tikzpicture}%
	\caption{Left: Discretization node pattern corresponding to the control volume centers for a 1D problem using the finite volume method. None of the nodes existing at a certain level $\ell$ are present at another level. Right: Interpolating a function to finer grids smooths the function.}
	\label{fig:grids}
\end{figure}
The prolongation operator is often chosen to interpolate linearly. In our situation, the effect is a smoothing of the function when mapped from one level to the next.
Note that MLMC works best if the results on consecutive levels match as closely as possible. 
Alternative definitions for $\chlvl{\ell_1}{\ell_2}$ are of course possible. However, choosing a poor definition results in a slower decay of $\var{\bs{Y}_\ell}$ since it can cause unnecessary dissimilarity between $\bs{Q}_{m_\ell}$ and $\chlvl{\ell-1}{\ell}\bs{Q}_{m_{\ell-1}}$. 


\subsubsection{Revised algorithm and bias estimation}
\label{section:algo}
\newcommand{\Lmax}{{\bar{L}}}
For (discrete) vector valued quantities, (\ref{eq:MC_def1Y}) and (\ref{eq:MLMC_def2Q}) are amended as follows:
\begin{equation}
\YMC_{\ell, n_\ell} = \frac{1}{n_\ell}\sum\limits_{i=1}^{n_\ell} \Big(\bs{Q}_{m_\ell}(\omega_i) - \chlvl{\ell-1}{\ell}\bs{Q}_{m_{\ell-1}}(\omega_i)\Big) \quad \text{and} \quad
\QMLMC_{\bs{m},\bs{n}} \triangleq \sum\limits_{\ell=0}^{L}\chlvl{\ell}{\Lmax}\YMC_{\ell,n_\ell}. 
\label{eq:MLMCvec_def}
\end{equation}
In this paper, the vector valued estimator is always returned at some fixed level $\Lmax$. If a sufficiently small RMSE is reached for some $L<\Lmax$, no samples are taken at levels $\ell > L$. Returning the result at a fixed predetermined level simplifies the implementation of the optimization algorithm. Optimization software usually requires a gradient and Hessian of a given dimension that is not allowed to change from iteration to iteration. 


Some methods to extend the MLMC theory to vector or function valued QoI can be found in \cite[pp.~274--276]{giles2015}. Let $\chlvl{}{\ell}$ be the discretization operator, which samples a continuous function in the discretization nodes of level $\ell$. In this paper, we demand the MSE (\ref{eq:MLMC_error}) to be smaller than $\epsilon^2$ for each point on the final level $\Lmax$, i.e., we demand 
\begin{equation}
\mean{(\QMLMC_{\bs{m},\bs{n}} - \chlvl{}{\Lmax}\mean{Q})^2} \leq \epsilon^2. \label{eq:RMSE_def}
\end{equation} 
Much of the MLMC theory for scalar valued quantities can then be reused without much modification.

Evaluating the variance of $\QMLMC_{\bs{m},\bs{n}}$ from the definition (\ref{eq:MLMCvec_def}) shows that the relevant series of variances necessary to evaluate (\ref{eq:MLMC_n}) for all the points is given by $\{\var{\chlvl{\ell}{\Lmax}\bs{Y}_\ell}\}_{\ell = 0}^L$. These can be approximated as $\{\chlvl{\ell}{\Lmax}\var{\bs{Y}_\ell}\}_{\ell = 0}^L$, where $\var{\bs{Y}_\ell}$ are estimated using some warm up samples. The optimal number of samples is determined for each domain point separately. Then, the maximum over all the domain points is taken as $\bs{n}$. This ensures that $\sum_{\ell=0}^{L}n_\ell^{-1}\chlvl{\ell}{\Lmax}\var{\bs{Y}_\ell} \leq \epsilon^2 /2 $ in all points of the domain. 

\newcommand{\norminf}[1]{\ensuremath{\|{#1}\|_\infty}}
For the bias we start with an estimation of $\rho$ in (\ref{eq:MC_disc_error}). Assume there exist $c,c' \in \mathbb{R}$ such that from a certain level $\ell$ onward
$$\norminf{\mean{\bs{Q}_{m_\ell} - \chlvl{}{\ell}Q}} \approx cm_\ell^{-\rho} = c2^{-\rho \ell} \; \textrm{ and } \; \norminf{\mean{\bs{Q}_{m_\ell} - \chlvl{\ell-1}{\ell}\bs{Q}_{m_{\ell-1}}}} \approx  c'2^{-\rho \ell}.$$
These assumptions were found experimentally to hold best, especially for low $\ell$, when using the inf-norm.
Fitting a line to $\log_2\norminf{\mean{\bs{Q}_{m_\ell} - \chlvl{\ell-1}{\ell}\bs{Q}_{m_{\ell-1}}}} \allowbreak \approx \log_2c' -\rho \ell$ and getting the first degree coefficient then provides an estimation of $\rho$.
The reverse triangle inequality yields
\begin{align*}
\norminf{\mean{\bs{Q}_{m_\ell} - \chlvl{\ell-1}{\ell}\bs{Q}_{m_{\ell-1}}}} &\geq   \norminf{\mean{\chlvl{\ell-1}{\ell}\bs{Q}_{m_{\ell-1}} - \chlvl{}{\ell}Q}} 
- \norminf{\mean{\bs{Q}_{m_\ell} - \chlvl{}{\ell}Q}} \\
& \approx (2^\rho -1) \norminf{\mean{\bs{Q}_{m_\ell} - \chlvl{}{\ell}Q}},
\end{align*}
leading to the following bound for the largest bias over the domain:
\begin{equation}
\norminf{\mean{\bs{Q}_{m_\ell} - \chlvl{}{\ell}Q}} < (2^\rho - 1)^{-1}\norminf{\mean{\bs{Q}_{m_\ell} - \chlvl{\ell-1}{\ell}\bs{Q}_{m_{\ell-1}}}}. \label{eq:bias}
\end{equation}



Since the necessary number of levels $L$ is not a~priori known, the algorithm starts out with only a few levels and checks 
\begin{equation}
\norminf{\sum\limits_{\ell=0}^{L}n_\ell^{-1}\chlvl{\ell}{\Lmax}\var{\bs{Y}_\ell}}
+ 
\norminf{\mean{\bs{Q}_{m_L} - \chlvl{}{L}Q}}^2 \vphantom{\sum\limits_{\ell=0}^{L}} \leq \epsilon^2,
\label{eq:MLMC_error_vec}
\end{equation}
with the second term estimated through (\ref{eq:bias}). This is a somewhat overly conservative test for (\ref{eq:RMSE_def}). The above equation holds for all domain points if it holds for the worst case point, hence the inf-norm over the first term. 
It is also sufficient to simply replace the first term by $\epsilon^2/2$. 
If the resulting requirement for the bias is not satisfied, an additional level is added. An overestimation of either term would cause the algorithm to consider an additional unnecessary level. Note that this is not too bad if the dominant cost is on the coarsest grid, as is the case for all experiments in this paper.
This provides another justification for the use of the conservative inf-norm in the bias estimation.
The full MLMC algorithm is given in Algorithm \ref{alg:MLMC}. 
\alglanguage{pseudocode}
\begin{algorithm}
	\caption{Multilevel Monte Carlo estimation of function valued quantities}
	\label{alg:grad}
	\begin{algorithmic}[1]
		\State $L \leftarrow 0$, $converged \leftarrow \text{false}$
		\While {not $converged$ and $L \leq \Lmax$}
		\State take an amount $n_\text{init}$ of initial samples at level $L$
		\State estimate $\var{\bs{Y}_L}$ from these samples
		\State calculate the optimal number of samples $\bs{n} = \{n_\ell\}_{\ell=0}^L$  following \S\ref{section:algo}.
		\State take more samples on levels $0,\ldots,L$ until the total number taken is at least $\bs{n}$
		\If {$L \geq 1$}
		\State estimate $\rho$ and the bias following \S\ref{section:algo}
		\State $converged \leftarrow $ check (\ref{eq:MLMC_error_vec}) at level $L$.
		\EndIf
		\State $L \leftarrow L+1$
		\EndWhile
		\State return $\QMLMC_{\bs{m},\bs{n}}$, following (\ref{eq:MLMCvec_def})
	\end{algorithmic}
	\label{alg:MLMC}
\end{algorithm}



\section{Estimator for the gradient}
\label{section:samples}
We turn now to the specific problem of finding an estimate for $\mean{\bs{p}}$ and thus for the gradient in the optimality conditions (\ref{eq:optconds_lagr_d}). Estimating $\mean{\bs{p}}$ seems to require an estimation of $\mean{\bs{y}}$ first. This leads to two problems. First, it is assumed that the available computer memory is too small to save all the samples used to estimate $\mean{\bs{y}}$. Any such sample is thus lost unless it is recalculated later, thereby increasing calculation cost. Secondly, it is unclear which MSE would have to be requested for $\mean{\bs{y}}$. So, we want to get rid of the need to estimate $\mean{\bs{y}}$ in advance. Moreover, we want to retain the property that the calculated gradient is \hl{exact}, meaning that it is the exact gradient of some cost function.  

\subsection{Generating samples of $\bs{p}$ directly}
\label{sec:est/1}
\newcommand{\Vest}[1]{\ensuremath{\smash{\hat{V}_{#1}}}}
\newcommand{\ipO}[2]{\ensuremath{({#1})_{D,{#2}}}}
The $\mean{\bs{y}}$ term in (\ref{eq:optconds_lagr_d}) stems from
$\nabla_{\bs{y}} \gamma\|\std{\bs{y}}\|^2 \\ = 2\gamma(\bs{y} - \mean{\bs{y}})$, where the gradient is expressed w.r.t. the inner product
\begin{equation}
\ip{\bs{u}, \bs{v}} = \int_{\Omega} \frac{\bs{u}^T\bs{v}}{m^d} \d{\mu(\omega)} \approx \ip{u,v}. \label{eq:ip_d_omega_discr}
\end{equation}
This holds for any stochastic space, in particular also for a finite subset of samples $\Omega_0 = \{\omega_1, \ldots, \omega_n\} \subset \Omega$, each having equal probability. For any such set $\Omega_0$, (\ref{eq:ip_d_omega_discr}) reduces to
\begin{equation}
(\bs{u}, \bs{v})_{D,\Omega_0} = \frac{1}{n}\sum_{\omega \in \Omega_0} \frac{\bs{u}^T\bs{v}}{m^d}. \label{eq:ip_d_omega_discr_2}
\end{equation}
Writing $\bs{y}(\omega_i) = \bs{y}_i$, the following gradient w.r.t. $(., .)_{D,\Omega_0}$ 
is therefore equal to
\begin{equation}
\nabla_{\bs{y}} \Bigg\| \sqrt{\frac{1}{n}\sum_{j=1}^{n}(\bs{y}_j - \frac{1}{n}\sum_{i=1}^{n} \bs{y}_i)^2} \Bigg\|^2 = 2(\bs{y} - \frac{1}{n}\sum_{i=1}^{n}{\bs{y}_i}).
\label{eq:diff_biased_var}
\end{equation}
Hence, if $\mean{\bs{y}}$ in (\ref{eq:optconds_lagr_d}) is estimated by means of $n$ MC samples, the standard deviation term in $J(\bs{y},\bs{u})$ is to be evaluated as suggested by the l.h.s in (\ref{eq:diff_biased_var}), i.e., by using the standard (biased) sample variance. Any alternative way to estimate $\mean{\bs{y}}$ entails a corresponding change in the estimator for the variance in the cost functional and vice versa. Consider another estimator based on two sets of $n$ samples each:
\begin{equation}
\hat{V}[\bs{y}] \triangleq \frac{1}{2n}\sum_{j=1}^{n}(\bs{y}_j - \bs{y}_j')^2
\end{equation}
with $\bs{y}_j' = \bs{y}(\omega_j')$ and $\Omega'_0 = \{\omega_1', \ldots, \omega_n'\} \subset \Omega$ a second set of samples. All of the $2n$ samples are independent. Due to the independence of $\omega_j$ and $\omega_j'$ in particular, it is an unbiased estimator for the variance since
\begin{align*}
\mean{(\bs{y}_j - \bs{y}_{j}')^2} & = \mean{(\bs{y}_j - \mean{\bs{y}} + \mean{\bs{y}} - \bs{y}_{j}')^2} \\ 
& = \mean{(\bs{y}_j - \mean{\bs{y}})^2} + \mean{(\mean{\bs{y}} - \bs{y}_{j}')^2} \\
& = 2\var{\bs{y}}.
\end{align*}
Note that the MSE $\var{\Vest{}[\bs{y}]}$ is somewhat less favorable than that of the standard biased estimator, i.e., more samples are needed for an accurate estimate of $\var{\bs{y}}$. This can be demonstrated as follows. Assuming that $\bs{y}_j$ and $\bs{y}_j'$ are Gaussian with variance $\sigma^2$, it can be shown that $\var{(\bs{y}_j - \bs{y}_{j}')^2} = 8\sigma^4$ and thus $\var{\Vest{}[\bs{y}]} =\smash{ \frac{2\sigma^4}{n}}$. The standard biased variance estimator uses only $n$ realizations of $\bs{y}$ and has the same variance. However, since it leads to a gradient containing the term $\mean{\bs{y}}$, which has to be estimated in advance with additional samples of $\bs{y}$, its implied computational cost is not lower in practice. In contrast, the gradient corresponding to $\Vest{}[\bs{y}]$ w.r.t. $(., .)_{D,\Omega_0}$ is 
\begin{equation}
\nabla_{\bs{y}} \Big\| \sqrt{\hat{V}[\bs{y}]} \Big\|^2 = \bs{y} - \bs{y}'. \label{eq:grad_V}
\end{equation}
Using this expression in (\ref{eq:optconds_lagr_d}) to replace $2(\bs{y} - \mean{\bs{y}})$ yields the gradient of (\ref{eq:Jyu2_d}) where $\std{\bs{y}}$ is estimated by $(\Vest{}[\bs{y}])^{1/2}$ and $\mean{.}$ by the average over $\Omega_0$. The $j$-th sample of $\bs{p}$ at some given level then requires two solves of the state equation to obtain $\bs{y}_j$ and $\bs{y}_j'$ and a single solve of the adjoint equation. Because each sample $\bs{p}_j$ depends on distinct samples $\omega_j$ and $\omega_j'$, all of $\{\bs{p}_j\}_{j=1}^n$ are clearly independent, as is required in the MLMC method. 

\begin{remark}
	It is in principle possible to improve the resulting gradient estimator by using $\{\bs{y}'_j\}_{j=1}^n$ to generate additional samples $\{\bs{p}'_j\}_{j=1}^n$ of $\bs{p}$ that in effect correspond with $\{\omega'_j\}_{j=1}^n$, at the cost of $n$ additional adjoint equation solves. However, $\bs{p}_j$ and $\bs{p}'_j$ are not independent! If compromising on the independence of samples is allowed, many other methods can be constructed. The next section provides such a method that is very similar but easier to analyze and generalize.
\end{remark}

\subsection{Generating cheaper samples of $\bs{p}$ directly}
\label{sec:est/2}
Consider yet another estimator for the variance based only on a single set of $n$ independent samples $\Omega_0$:
\begin{equation}
\Vest{1}[\bs{y}] \triangleq \frac{1}{2n}\sum_{j=1}^{n}(\bs{y}_j - \bs{y}_{j-1})^2
\end{equation}
where $\bs{y}_{0} = \bs{y}_{n}$. Since the samples $\Omega_0$ are independent, we have again $\mean{(\bs{y}_j - \bs{y}_{j-1})^2} = 2\var{\bs{y}}$, making $\Vest{1}[\bs{y}]$ an unbiased estimator for $\var{\bs{y}}$.
The gradient w.r.t. $\ipO{.,.}{\Omega_0}$ is worked out explicitly in Appendix \ref{app:derivative}, yielding
\begin{equation}
\nabla_{\bs{y}} \Big\| \sqrt{\hat{V}_1[\bs{y}]} \Big\|^2 = 2\bs{y} - \bs{y}_{+1} - \bs{y}_{-1} \label{eq:grad_V1}
\end{equation}
where $\bs{y}_{+i}$ denotes the stochastic variable $\bs{y}$ `shifted' by $i$ samples in the sampled stochastic space: $\bs{y}_{+i}(\omega_j) \triangleq \bs{y}(\omega_{j+i})$ with $\omega_{n+i} = \omega_i$. This definition only makes sense for a given ordered finite subset of $\Omega$. A single sample of (\ref{eq:grad_V1}), e.g., the sample corresponding to $\omega_j$, is then $2\bs{y}_j - \bs{y}_{j+1} - \bs{y}_{j-1}$. Calculating multiple samples in succession then merely requires to save the previous sample, $\bs{y}_{j-1}$, and obtain the next sample, $\bs{y}_{j+1}$, early. 

One of the disadvantages is the somewhat higher variance of the estimator:
\begin{align*}
\var{\Vest{1}[\bs{y}]} &= \frac{1}{4n^2}\sum_{j=1}^{n}\var{(\bs{y}_j - \bs{y}_{j-1})^2} + \frac{1}{2n^2}\sum_{j=1}^{n}\cov{(\bs{y}_j - \bs{y}_{j-1})^2}{(\bs{y}_{j+1} - \bs{y}_{j})^2}.
\end{align*}
Under the assumption that $\bs{y}_j$ are Gaussian variables with variance $\sigma^2$, one has that
\begin{align*}
\var{(\bs{y}_j - \bs{y}_{j-1})^2} = 8\sigma^4 \quad \text{and} \quad \cov{(\bs{y}_j - \bs{y}_{j-1})^2}{(\bs{y}_{j+1} - \bs{y}_{j})^2} = 2\sigma^4,
\end{align*}
and therefore
\begin{equation*}
\var{\hat{V}_1[\bs{y}]} = \frac{2\sigma^4}{n} + \frac{\sigma^4}{n} = \frac{3\sigma^4}{n}.
\end{equation*}
This is to be compared to the Cram\'er-Rao lower bound for unbiased estimators of the variance, which is $\frac{2\sigma^4}{n}$ for Gaussian variables.

Taking dependent samples requires some changes in the classical MC and MLMC theory. For simplicity, the description is, as before, given for a scalar valued QoI.
We consider only dependencies between samples of $Y_\ell$ taken on the same level $\ell$. From (\ref{eq:MLMC_def2Q}) we then have 
\newcommand{\MC}[1]{\ensuremath{\hat{#1}^{\text{MC}}}}
\newcommand{\MLMC}[1]{\ensuremath{\hat{#1}^{\text{MLMC}}}}
\begin{equation*}
\var{\MLMC{Q}_{\bs{m},\bs{\bs{n}}}} = \sum\limits_{\ell=0}^{L}\var{\MC{Y}_{\ell,n_\ell}} = \sum\limits_{\ell=0}^{L}n_\ell^{-2}\sum\limits_{i=1}^{n_\ell}\sum\limits_{j=1}^{n_\ell}\cov{Y_{\ell,i}}{Y_{\ell,j}}
\end{equation*}
where $Y_{\ell, i}$, the $i$-th sample of $Y_\ell$, is interpreted as a random variable. If the covariance matrix is circulant and if $\cov{Y_{\ell,i}}{Y_{\ell,j}} = 0$ for $b < |i-j| < n_\ell -b$, we get
\begin{equation}
\var{\MLMC{Q}_{\bs{m},\bs{\bs{n}}}} = \sum\limits_{\ell=0}^{L}n_\ell^{-1}(\var{Y_\ell} + 
2\sum\limits_{j=2}^{b+1}\cov{Y_{\ell,1}}{Y_{\ell,j}}). \label{eq:stoch_error_correlated}
\end{equation}
For independent samples, $b = 0$ and this equation reduces to (\ref{eq:MLMC_meanvar}). For the sampling method associated with $\Vest{1}$ above, we have $b=2$. It is then necessary to estimate the $b=2$ covariances in addition to the sample variance in Line 5 of Algorithm \ref{alg:MLMC}. In (\ref{eq:MLMC_n}) and in Theorem \ref{theorem:MLMC}, $\var{Y_\ell}$ is then replaced by $\var{Y_\ell} + 2\sum_{j=2}^{b+1}\cov{Y_{\ell,1}}{Y_{\ell,j}}$. Note that since the covariances can also be negative, they can actually reduce the amount of samples required! Because the variances and covariances are estimated by a small number of samples, especially at the finer levels, the risk of underestimating the latter quantity is mitigated by replacing $\var{Y_\ell}$ with $\max\{\frac{1}{2}\var{Y_\ell}, \var{Y_\ell} + 
2\sum_{j=2}^{b+1}\cov{Y_{\ell,1}}{Y_{\ell,j}}\}$ instead. The $\frac{1}{2}$ term was chosen rather arbitrarily and can probably be improved, depending on the precision of the estimators. $\Vest{1}$ is used in the remainder of this text. 

\subsection{Exactness of the MLMC generated gradient}
\newcommand{\rJc}{\ensuremath{\hat{J}}}

Assume one uses the MLMC method to calculate the gradient in a given point $\bs{u}$ defined on the finest level. This requires the method to take samples $\bs{Q}_{m_\ell}(\bs{v}, \omega, \Omega_k)$ which depend on $\bs{v} = \chlvl{\Lmax}{\ell}\bs{u}$. Following the previous subsections, each sample may depend on multiple elements in the given ordered sample set $\Omega_k$. The variable $\omega \in \Omega_k$ is simply used to index the samples.

\begin{theorem}[exactness of the MLMC gradient]
	\label{theorem:MLMCexactness}
	Assume that for any level $\ell$ and any sample set $\Omega_k \subset \Omega$ of size $n_k$, the mapping $\smash{\mathbb{R}^{m_\ell^d} \rightarrow \mathbb{R}^{m_\ell^d}}: \bs{v} \mapsto n_k^{-1}\sum_{\omega \in \Omega_k} \bs{Q}_{m_\ell}(\bs{v}, \omega, \Omega_k)$ forms the exact gradient of some cost function. 
	Then, the MLMC method that uses on each level a combination of those sample sets, describes a mapping $\smash{\mathbb{R}^{m_\Lmax^d} \rightarrow \mathbb{R}^{m_\Lmax^d}: \bs{u} \mapsto \nabla \hat{J}(\bs{u})}$ which is itself the exact gradient of some cost function $\hat{J}$.
\end{theorem}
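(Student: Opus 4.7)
The plan is to exhibit an explicit cost function $\hat{J}$ whose exact gradient is the MLMC estimator viewed as a function of $\bs u$. By hypothesis, for each level $\ell$ and each sample set $\Omega_k$ of size $n_k$, the map $\bs v \mapsto n_k^{-1}\sum_{\omega\in\Omega_k}\bs Q_{m_\ell}(\bs v,\omega,\Omega_k)$ coincides with $\nabla_\ell J_{\ell,k}$ for some functional $J_{\ell,k}$ on level $\ell$, where $\nabla_\ell$ is taken with respect to the scaled inner product $(\cdot,\cdot)_\ell = \tfrac{1}{m_\ell^d}(\cdot)^T(\cdot)$ from (\ref{eq:norm_ip_d}). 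In the MLMC estimator each level $\ell$ shares a single ordered sample set $\Omega_\ell$ between its fine and coarse contributions; write $J_\ell^{\uparrow}$ and $J_\ell^{\downarrow}$ for the corresponding cost functions on levels $\ell$ and $\ell-1$, and set $J_0^{\downarrow}\equiv 0$ in accordance with $\bs Q_{m_{-1}}\equiv 0$. The candidate is
\begin{equation*}
\hat J(\bs u) \;=\; \sum_{\ell=0}^{L}\Big(J_\ell^{\uparrow}(\chlvl{\Lmax}{\ell}\bs u) \;-\; J_\ell^{\downarrow}(\chlvl{\Lmax}{\ell-1}\bs u)\Big).
\end{equation*}

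The geometric fact that drives everything is that the prolongation $\chlvl{\ell}{\Lmax}$ is the Riesz adjoint of the restriction $\chlvl{\Lmax}{\ell}$ for the scaled inner products above. Indeed, $(\chlvl{\Lmax}{\ell}\bs u,\bs w)_\ell = (\bs u,(\chlvl{\Lmax}{\ell})^{*}\bs w)_\Lmax$ unwinds to $(\chlvl{\Lmax}{\ell})^{*} = (m_\Lmax^d/m_\ell^d)(\chlvl{\Lmax}{\ell})^T = c^{\Lmax-\ell}(\chlvl{\Lmax}{\ell})^T$, which by (\ref{eq:chlvl_condition}) equals $\chlvl{\ell}{\Lmax}$. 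Consequently, for any differentiable $F$ defined on level $\ell$, the chain rule gives
\begin{equation*}
\nabla_{\bs u}\big(F\circ \chlvl{\Lmax}{\ell}\big)(\bs u) \;=\; \chlvl{\ell}{\Lmax}\,\nabla_\ell F(\chlvl{\Lmax}{\ell}\bs u).
\end{equation*}

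Applying this identity term by term to $\hat J$ and substituting the hypothesized expressions for $\nabla_\ell J_\ell^{\uparrow}$ and $\nabla_{\ell-1} J_\ell^{\downarrow}$ yields
\begin{equation*}
\nabla\hat J(\bs u) = \sum_{\ell=0}^{L}\!\Big(\chlvl{\ell}{\Lmax}\tfrac{1}{n_\ell}\!\sum_{i=1}^{n_\ell}\!\bs Q_{m_\ell}(\chlvl{\Lmax}{\ell}\bs u,\omega_i,\Omega_\ell) - \chlvl{\ell-1}{\Lmax}\tfrac{1}{n_\ell}\!\sum_{i=1}^{n_\ell}\!\bs Q_{m_{\ell-1}}(\chlvl{\Lmax}{\ell-1}\bs u,\omega_i,\Omega_\ell)\Big).
\end{equation*}
The composition rule $\chlvl{\ell-1}{\Lmax} = \chlvl{\ell}{\Lmax}\chlvl{\ell-1}{\ell}$, also a consequence of (\ref{eq:chlvl_condition}), lets me factor $\chlvl{\ell}{\Lmax}$ out of each summand; what remains inside the sum is precisely $\YMC_{\ell,n_\ell}$ as defined in (\ref{eq:MLMCvec_def}), so $\nabla\hat J(\bs u) = \QMLMC_{\bs m,\bs n}(\bs u)$.

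The only non-routine step is the adjoint identification $(\chlvl{\Lmax}{\ell})^{*} = \chlvl{\ell}{\Lmax}$. This is not true for arbitrary inter-grid transfer operators; it works here precisely because the scaling constant $c=2^d$ in (\ref{eq:chlvl_condition}) agrees with the volume ratio $m_\Lmax^d/m_\ell^d$ built into the discretized inner product (\ref{eq:norm_ip_d}). Once this compatibility is in hand, the rest is the chain rule together with the telescopic regrouping MLMC was built to exploit, and no regularity beyond the existence of the per-level cost functions $J_{\ell,k}$ is needed.
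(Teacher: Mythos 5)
Your proposal is correct and follows essentially the same route as the paper: identify per-level cost functions from the hypothesis, pull gradients back through the restriction operators via the chain rule and the transpose relation in (\ref{eq:chlvl_condition}), and regroup using $\chlvl{\ell-1}{\Lmax}=\chlvl{\ell}{\Lmax}\chlvl{\ell-1}{\ell}$ to recover the MLMC estimator (\ref{eq:MLMCvec_def}). The only difference is that you specialize at the outset to the case $c^{\Lmax-\ell}=m_\Lmax^d/m_\ell^d$ (so that $(\chlvl{\Lmax}{\ell})^{*}=\chlvl{\ell}{\Lmax}$ and all weights are $1$), whereas the paper carries the general factors $c^\Lmax m_\ell^d/(c^\ell m_\Lmax^d)$ through to the cost functional (\ref{eq:exact_costfun}) and only notes afterwards that they reduce to $1$ for the grids used here; you correctly flag this restriction yourself.
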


\begin{proof}
	Let $\rJ_\ell^k: \mathbb{R}^{m_\ell^d} \rightarrow \mathbb{R}$ denote the cost function 
	corresponding to the exact gradient $n_k^{-1}\sum_{\omega \in \Omega_k} \bs{Q}_{m_\ell}(\bs{v},\omega, \Omega_k) = \nabla \rJ_\ell^k (\bs{v})\in \smash{\mathbb{R}^{m_\ell^d}}$. Consider $\bs{u} \in \smash{\mathbb{R}^{m_\Lmax^d}}$ given at the level $\Lmax$. The chain rule and the second equation of (\ref{eq:chlvl_condition}) yield
	\begin{equation}
	\diff{}{\bs{u}} \rJ_\ell^k(\chlvl{\Lmax}{\ell}\bs{u})[\bs{h}] = (\nabla \rJ_\ell^k(\chlvl{\Lmax}{\ell}\bs{u}), \chlvl{\Lmax}{\ell}\bs{h}) = \frac{1}{m^d_\ell}\nabla \rJ_\ell^k(\chlvl{\Lmax}{\ell}\bs{u})^T\chlvl{\Lmax}{\ell}\bs{h} = \frac{c^\ell m^d_\Lmax}{c^\Lmax m^d_\ell}(\chlvl{\ell}{\Lmax}\nabla \rJ_\ell^k(\chlvl{\Lmax}{\ell}\bs{u}), \bs{h}).
	\label{eq:chain_gradient}
	\end{equation}
	Note that the two instances of the inner product (\ref{eq:norm_ip_d}) are different since they use a different number of discretization points.
	%
	Let $\nabla\rJc(\bs{u})$ denote the gradient approximation generated by the MLMC algorithm, assumed to have converged on a level $L$.
	Denote the set of $n_\ell$ samples taken by the algorithm at any level $\ell$ by $\Omega_\ell \subset \Omega$.
	From (\ref{eq:MLMCvec_def}) we have
	\newcommand{\Ql}[2]{\ensuremath{\bs{Q}_{m_{#1}}(\chlvl{\Lmax}{#1}\bs{u}, \omega, \Omega_{#2} ) }}
	\begin{align*}
	\nabla \rJc(\bs{u}) &= 
	\sum\limits_{\ell=0}^{L} \chlvl{\ell}{\Lmax}\frac{1}{n_\ell}\sum\limits_{\omega\in\Omega_\ell} \big(\Ql{\ell}{\ell} - \chlvl{\ell-1}{\ell}\Ql{\ell-1}{\ell}\big) \\
	&= \chlvl{0}{\Lmax}\nabla\rJ_0^0(\chlvl{\Lmax}{0}\bs{u}) + 
	\sum_{\ell = 1}^{L} \chlvl{\ell}{\Lmax} \big(
	\nabla\rJ_\ell^\ell(\chlvl{\Lmax}{\ell} \bs{u}) - \chlvl{\ell-1}{\ell} \nabla\rJ_{\ell-1}^{\ell}(\chlvl{\Lmax}{\ell-1} \bs{u})\big).
	\end{align*}
	This calculated gradient is the exact gradient of the cost functional
	\begin{equation}
	\rJc(\bs{u}) = \frac{c^\Lmax m^d_0}{c^0 m^d_\Lmax} \rJ_0^0(\chlvl{\Lmax}{0}\bs{u}) + 
	\sum_{\ell = 1}^{L} 
	\big(\frac{c^\Lmax m^d_\ell}{c^\ell m^d_\Lmax}\rJ_\ell^\ell(\chlvl{\Lmax}{\ell} \bs{u}) 
	- \frac{c^\Lmax m^d_{\ell-1}}{c^{\ell-1} m^d_\Lmax}\rJ_{\ell-1}^{\ell}(\chlvl{\Lmax}{\ell-1} \bs{u})\big)
	\label{eq:exact_costfun}
	\end{equation}
	as can be checked using (\ref{eq:chain_gradient}). 
\end{proof}

In a similar fashion, it can be proven that the Hessian vector product calculated using MLMC is exact for some cost functional. 
Note that in this paper, the constant $c$ from (\ref{eq:chlvl_condition}) satisfies $c = 2^d$ and $m^d_\ell = 2^{d\ell} m_0^d$, such that $\smash{c^\ell m^d_\Lmax(c^\Lmax m^d_\ell)^{-1}} = 1$ in (\ref{eq:chain_gradient}). This is also true for many other common grid definitions and mapping operators. The cost functional (\ref{eq:exact_costfun}) then simplifies to 
\begin{equation}
\rJc(\bs{u}) = \rJ_0^0(\chlvl{\Lmax}{0}\bs{u}) + 
\sum_{\ell = 1}^{L} 
\big(\rJ_\ell^\ell(\chlvl{\Lmax}{\ell} \bs{u}) 
- \rJ_{\ell-1}^{\ell}(\chlvl{\Lmax}{\ell-1} \bs{u})\big),
\end{equation}
which corresponds to the cost functional calculated using MLMC. The reason to construct the MLMC estimator for the gradient directly is that the optimization requires the gradient with some known precision, as measured by the RMSE. For some given number of samples $\bs{n}$, the RMSE on the cost functional estimator is, in general, completely different from the RMSE on the gradient estimator.

\section{Numerical Optimization}
\label{section:optimization}
We follow the \hl{reduced} optimization approach, in which the state $y$ is eliminated. This results in $u$ being the only unknown in the optimization problem. The alternative is the \emph{simultaneous} approach in which the original constrained optimization problem is solved directly \cite{borzi2009multigrid}. However, in the stochastic case, one has that $y \in H^1_0(D) \otimes L^2(\Omega)$. Assuming a MLMC approach, the full sample set of discretized state functions on all levels would have to be part of the variables that one optimizes for. This approach seems infeasible due to excessive memory demands.

In this section, we elaborate on two methods to solve our optimization problem up to a given gradient tolerance. First, the use of the nonlinear conjugate gradient (NCG) method is investigated. Specific attention is given to how many samples and which samples should be used. Next, the Newton method is studied, which requires Hessian information. Because of its unwieldy size in the problems we consider, the Hessians are never computed explicitly and the linear systems in the Newton iterations are solved using a matrix-vector product based implementation of the conjugate gradient method (CG). For quadratic problems, the NCG method and a single Newton step with CG are known to be equivalent \cite{nazareth2009}. The difference in this stochastic context will lie mainly in the times at which the samples are updated.

\subsection{Gradient based optimization}
The use of MLMC in a gradient based optimization algorithm is tested using the nonlinear conjugate gradient (NCG) method. Variables at the $k$-th iteration are indicated by a $(k)$ superscript. The gradient calculated at iteration $k$ is denoted by $\smash{\bs{g}^{(k)}}$. 
In the NCG method, the search direction $\bs{d}^{(k)}$ is obtained recursively as $\smash{\bs{d}^{(k)} = -\bs{g}^{(k)} + \beta^{(k)}\bs{d}^{(k-1)}}$ with $\smash{\bs{d}^{(0)} = -\bs{g}^{(0)}}$. We use the Dai-Yuan (DY) formula $$\beta^{(k)} = \frac{||\bs{g}^{(k)}||^2 }{(\bs{d}^{(k-1)}, \bs{g}^{(k)} - \bs{g}^{(k-1)})},$$ which offers certain advantages in PDE constrained optimization \cite{Borzi2012, Dai1999}.
The system input is then updated as
$\bs{u}^{(k+1)} = \bs{u}^{(k)} + s^{(k)}\bs{d}^{(k)}.$
The step size $s^{(k)}$ is found by approximating the cost function along the search direction with an interpolating parabola using a second gradient evaluation (we use the point $\bs{u}^{(k)} + s^{(k-1)}\bs{d}^{(k)}$ with $s^{(-1)}$ some well chosen initial value). Note that this approximate linesearch is exact for a quadratic problem.

\subsubsection{Choosing samples}
\newcommand{\Jn}{\rJc_\$}
In theory, the gradient and Hessian vector product are deterministic quantities due to the expected value operators in their equations. Computationally however, the result depends on the specific samples drawn by the MLMC algorithm. These samples can be saved memory efficiently by storing the number of samples $\bs{n}$ taken at each level and the random number generator seeds that have ultimately determined the samples.
Let the subscript $f$ in $\nabla \rJc_f(\bs{u})$ denote that the gradient in some $\bs{u}$ is calculated using some fixed set of samples $f$. 
The function $\nabla \rJc_f$ is then deterministic. 
Because $\bs{n}$ and $L$ are then given, the bias and many other things do not have to be estimated again.
The case where new samples are taken is denoted with the subscript $\$$, e.g., $\nabla \rJc_\$ (\bs{u})$.

To demonstrate the effect of any fixed or new samples, we perform 40 NCG-DY optimization steps using $\nabla \rJc_f$.  Here, $f$ is determined during the first gradient evaluation call with the tolerance for the underlying MLMC algorithm set to $\epsilon = \num{1e-3}$.
The blue line in Figure \ref{fig:new_vs_fixed} shows the decay of the norm of $\nabla \rJc_f$.
The resulting sequence of control inputs $\smash{\{\bs{u}^{(k)}_f\}_{k=0}^{40}}$ are then evaluated using $\nabla \rJc_\$$. Only this new sample gradient $\nabla \rJc_\$ (\bs{u})$ is relevant to assess the quality of a control input $\bs{u}$. After all, the solution must perform well for the original problem, not just for the specific set of fixed samples $f$. The norm (\ref{eq:norm_ip_d}) of $\smash{\nabla \rJc_\$(\bs{u}^{(k)}_f)}$ is shown as the red line in Figure \ref{fig:new_vs_fixed}. Observe that the decay levels off at a certain point because $\nabla \rJc_f$ only resolves the gradient up to some RMSE $\epsilon$, which we have estimated using (\ref{eq:bias}) and (\ref{eq:MLMC_error_vec}). 
It is thus important to either stop at that point or to decrease $\epsilon$, generating a new, larger set of samples.
For comparison NCG-DY is also executed using $\nabla \rJc_\$$, i.e., while always using new random samples in each iteration. The resulting iterates, denoted by $\smash{\{\bs{u}_\$^{(k)}\}_{k=0}^{40}}$, are outperformed by those produced using fixed samples, even when tested using new samples. 
This is the argument for holding onto the fixed samples as long as possible.

\begin{figure}[b!]
	\centering
	\begin{minipage}[t]{.45\textwidth}
		\centering
		\sethw{0.5}{0.8}
		\definecolor{mycolor1}{rgb}{0.00000,0.44700,0.74100}%
\definecolor{mycolor2}{rgb}{0.85000,0.32500,0.09800}%
\definecolor{mycolor3}{rgb}{0.92900,0.69400,0.12500}%
\begin{tikzpicture}[baseline]

\begin{axis}[%
width=0.951\figurewidth,
height=\figureheight,
at={(0\figurewidth,0\figureheight)},
scale only axis,
xmin=0,
xmax=40,
xlabel style={font=\color{white!15!black}},
xlabel={iteration $k$},
ymode=log,
ymin=1e-05,
ymax=0.1,
yminorticks=true,
axis background/.style={fill=white}
]
\addplot [color=mycolor1, forget plot]
  table[row sep=crcr]{%
0	0.0435041281023918\\
1	0.00457589386362532\\
2	0.0012303193961584\\
3	0.00198354944443222\\
4	0.000681600569425512\\
5	0.00118999484980201\\
6	0.000620583208460862\\
7	0.000491259704438938\\
8	0.00057562290520128\\
9	0.000290570710672541\\
10	0.000427006221818035\\
11	0.000266842133892441\\
12	0.000230422214478471\\
13	0.000192069312542158\\
14	0.000297397159907632\\
15	0.000346977326308606\\
16	0.000121734557173113\\
17	0.000208471602708981\\
18	0.000251987751032512\\
19	0.000190423591160134\\
20	0.000183456746959438\\
21	0.00015028782238552\\
22	0.000287448526001508\\
23	8.01919690704077e-05\\
24	6.48061364038852e-05\\
25	0.000120714075658666\\
26	7.8004610985003e-05\\
27	0.000177997438154904\\
28	7.96262019540004e-05\\
29	0.000112802575036855\\
30	4.31652958448758e-05\\
31	0.000227438649799213\\
32	4.15752126083873e-05\\
33	6.31278798125758e-05\\
34	9.49747670175925e-05\\
35	4.47113031310873e-05\\
36	4.70144343474047e-05\\
37	2.61783604623309e-05\\
38	4.58391519182654e-05\\
39	3.62190072960656e-05\\
40	5.53639174645407e-05\\
};
\label{tikz:DYNCGfixed}

\addplot [color=mycolor2, forget plot]
  table[row sep=crcr]{%
0	0.0427088516670683\\
1	0.00446220543135468\\
2	0.00130751557097006\\
3	0.00209249512531425\\
4	0.000871732866367063\\
5	0.0015335475216593\\
6	0.000970928166907421\\
7	0.00067881812407504\\
8	0.000845693845567252\\
9	0.000383281394311167\\
10	0.00056820056549348\\
11	0.000378618973939186\\
12	0.000678371100940031\\
13	0.000580538935396865\\
14	0.000647100302323117\\
15	0.000860386649518173\\
16	0.000681043918743286\\
17	0.000465202539855689\\
18	0.000331665107777663\\
19	0.000472698250587955\\
20	0.000481084634637536\\
21	0.00186358969673393\\
22	0.000671745080392903\\
23	0.000462086077996553\\
24	0.00052182177923259\\
25	0.000752021523291336\\
26	0.000519196091384443\\
27	0.00052077445457585\\
28	0.000566282592148064\\
29	0.00103209280385629\\
30	0.000348274601780394\\
31	0.000343721794140679\\
32	0.000640130070645451\\
33	0.00123671930657265\\
34	0.000991909682619796\\
35	0.000389296120083134\\
36	0.000504445861985194\\
37	0.000628529090313135\\
38	0.000324127038133121\\
39	0.000355330676360389\\
40	0.000331665450084401\\ 
};
\label{tikz:reevaluation}

\addplot [color=mycolor3, forget plot]
  table[row sep=crcr]{%
0	0.0369385826290358\\
1	0.00991019654801363\\
2	0.00464793806703529\\
3	0.00491038193614514\\
4	0.00137009973314949\\
5	0.00249035256120037\\
6	0.00321613266029848\\
7	0.00242239096158975\\
8	0.00202205395997306\\
9	0.00105482810306755\\
10	0.00114803603307065\\
11	0.000779384369658431\\
12	0.000962758420921421\\
13	0.00100494975264006\\
14	0.00137249571104885\\
15	0.00119955067399861\\
16	0.0017364016961309\\
17	0.000818325100360348\\
18	0.000675959267745969\\
19	0.00223792949744706\\
20	0.000580203849795835\\
21	0.00043661390170538\\
22	0.000590963000547353\\
23	0.000818093448346747\\
24	0.000821631169078368\\
25	0.000750009809733129\\
26	0.000538374010117889\\
27	0.00102401309213627\\
28	0.000732703482138496\\
29	0.000930202417052265\\
30	0.00142683682540805\\
31	0.00057418678759361\\
32	0.00231449359834385\\
33	0.000964779264092382\\
34	0.000940515893093142\\
35	0.00120047889021187\\
36	0.00143150575471436\\
37	0.00117511714658783\\
38	0.000667451716796626\\
39	0.000932106551509511\\
40	0.000762598943384605\\ 
};
\label{tikz:DYNCGnew}

\addplot [color=black, dashed, forget plot]
table[row sep=crcr]{%
	0	0.000827599351237417\\
	1	0.00205614439733514\\
	2	0.00242624499748682\\
	3	0.00238620452798319\\
	4	0.00219862513299143\\
	5	0.00210741199390773\\
	6	0.00218604211291381\\
	7	0.00211115335931766\\
	8	0.00212670074103475\\
	9	0.00202060513518012\\
	10	0.00212996221508633\\
	11	0.00214609029645053\\
	12	0.00230236028426964\\
	13	0.00228373197802035\\
	14	0.00239534025447321\\
	15	0.00243151518121181\\
	16	0.00258367498680898\\
	17	0.0026000372500899\\
	18	0.00267962878730141\\
	19	0.00272276605187926\\
	20	0.00282407998118152\\
	21	0.00281636896114632\\
	22	0.00294054168510232\\
	23	0.0029907890281915\\
	24	0.00305067201383186\\
	25	0.00305944924541278\\
	26	0.00307023306377016\\
	27	0.00309448611763064\\
	28	0.00313038366810226\\
	29	0.00313650233699888\\
	30	0.00315153989541404\\
	31	0.00315953930948542\\
	32	0.00314717738160748\\
	33	0.00314067018548639\\
	34	0.00313817355813328\\
	35	0.0031438297927609\\
	36	0.00313591345609659\\
	37	0.00313589472749841\\
	38	0.00313702594449578\\
	39	0.00314153268101188\\
	40	0.00313877820637354\\
};
\label{tikz:RMSE}

\end{axis}
\end{tikzpicture}%
		\caption{Effect of using fixed or new samples on the evolution of the gradient, see \S7.1.1. $\smash{\|\nabla \rJc_f(\bs{u}^{(k)}_f)\|}$ (\ref{tikz:DYNCGfixed}), $\smash{\|\nabla \rJc_\$(\bs{u}^{(k)}_f)\|}$ (\ref{tikz:reevaluation}), $\smash{\|\nabla \rJc_\$(\bs{u}_\$^{(k)})\|}$ (\ref{tikz:DYNCGnew}). The expected RMSE $\epsilon$ of $\smash{\nabla \rJc_f(\bs{u}^{(k)}_f)}$ (\ref{tikz:RMSE}). 
		}
		\label{fig:new_vs_fixed}
	\end{minipage}
	\quad
	\begin{minipage}[t]{.45\textwidth}
		\centering
		\sethw{0.5}{0.8}
		\input{fig/variance_evolution2.tex}
		\caption{Evolution of the variances. $\|\var{\bs{Y}_\ell}\|_\infty$ (\ref{tikz:vars}) and $\smash{\| \max\{\frac{1}{2}\var{\bs{Y}_\ell},}$ \\ $\smash{\var{\bs{Y}_\ell} + 
				2\sum_{j=2}^{b+1}\cov{\bs{Y}_{\ell,1}}{\bs{Y}_{\ell,j}}\} \|_\infty}$ (\ref{tikz:covars}) for levels $\ell = \{0, \ldots, 5\}$.  A higher line always corresponds to a coarser grid.}
		\label{fig:variance_evolution}
	\end{minipage}
\end{figure}

Consider again the sequence $\{\bs{u}^{(k)}_f\}_{k=0}^{40}$. As the iterates come closer to the minimizer $\opt{\bs{u}}$, the variances $\var{\bs{Y}_\ell}$ tend to converge to some constant and, in general, nonzero level. Indeed, if $k \rightarrow \infty$, $\bs{u}^{(k)} \rightarrow \opt{\bs{u}}$ and the variances in consideration tend to those for $\opt{\bs{u}}$. This is illustrated in Figure \ref{fig:variance_evolution} where $\|\var{\bs{Y}_\ell}\|_\infty$ and $\| \max\{\frac{1}{2}\var{\bs{Y}_\ell}, \var{\bs{Y}_\ell} + 
2\sum_{j=2}^{b+1}\cov{\bs{Y}_{\ell,1}}{\bs{Y}_{\ell,j}}\} \|_\infty$ are shown for each level as a function of $k$. 
The use of correlated samples clearly reduces the variance on all levels. Observe also that during the first few iterations, the variances still change substantially. This causes the expected RMSE $\epsilon$ to fluctuate in Figure \ref{fig:new_vs_fixed}.

\newcommand{\cl}{q}
\subsubsection{Algorithm}
Let $\tau$ denote the tolerance on the gradient norm and consider Algorithm \ref{alg:optim_grad}. Line 2 evaluates the initial gradient and collects the samples into $f$. Lines 4--8 implement the stopping condition. If $\|\bs{g}^{(k)}\| \leq \tau$, the current iterate is checked again using new samples, see line 5, before returning it. Lines 9--10 describe the optimization step. Other optimization algorithms can also be used here. Lines 11--16 govern the generation of the gradient. In each iteration step, either the gradient is calculated using new samples, which are stored in $f$, see line 13, or the gradient is calculated using the existing fixed sample set $f$, see line 15. In that case the expected RMSE $\epsilon$ is also calculated. During the optimization procedure, a gradient is only useful if the gradient estimator $\bs{g}^{(k)}$ has a RMSE $\epsilon \leq \cl\|\bs{g}^{(k)}\|$. The constant $\cl$, which we set to $1$ for all experiments, essentially determines how large the relative RMSE $\epsilon / \|\bs{g}^{(k)}\|$ is allowed to be. An error of a given size has little effect on a large gradient, but may completely distort a small gradient. Algorithm \ref{alg:optim_grad} therefore keeps $\epsilon$ proportional to the currently attained gradient norm. Note that this is much more efficient than keeping $\epsilon$ proportional to the target norm\footnote{This behaviour can still be achieved using Algorithm \ref{alg:optim_grad} by setting $\epsilon^{(0)} = \cl\tau$ and $\eta = 0$.} $\tau$ since, in the former approach, all but the last few iterations are then computed using a larger $\epsilon$.
The samples are reused as long as possible until $\epsilon \leq \cl\|\bs{g}^{(k)}\|$ no longer holds, see line \ref{alg:optim_grad_increase_epsilon}. 
At that point $\epsilon$ is reduced by a well chosen factor $\eta$.
A smaller $\eta$ causes a given set of samples to last for more iterations, but the requested tolerance $\epsilon$ might then be reduced more than necessary, which is inefficient. In this paper we use $\eta = 0.2$. Note that $\epsilon$ should not be reduced below $\cl\tau$ as the iteration will stop approximately when $\|\bs{g}^{(k)}\| \leq \tau$ anyway. This is the reason for the $\max$ in lines 11--12. Note that line 11 also checks if $\epsilon$ is unnecessarily small by testing $\epsilon^{(k)} < \eta^2 \cl\|\bs{g}^{(k)}\|$.

\old{A gradient of norm $\|\nabla \rJ\| \approx \|\nabla \rJc_f\|$ can only be achieved if the gradient estimator $\nabla \rJc_f$ has a RMSE $\epsilon \leq c\|\nabla \rJc_f\|$, with $c$ a well chosen problem dependent parameter. For all problems that we tested, $c = 1$ was sufficient. 
	For the experiment in Figure \ref{fig:new_vs_fixed}, the gradient norm can even go significantly below $\epsilon$, and a larger $c$ would do just fine. For the sake of a fair comparison we take $c=1$ for all problems in the remainder of this paper.
	
	Let $\tau$ denote the requested tolerance on the gradient $\|\nabla \rJ \|$. Consider algorithm \ref{alg:optim_grad}, which makes sure that $\epsilon \leq c\|\nabla \rJc_f\|$ during most iteration steps. The samples are reused as long as possible until the above condition no longer holds. At that point we demand a sharper $\epsilon = \max\{c\tau, \eta c\|\bs{g}^{(k)}\|\}$ (there is no reason to ever have $\epsilon \leq c\tau$, hence the $\max$). A smaller $\eta$ causes a given set of samples to last for more iterations, but during some iterations, one might then have that $\epsilon << c\|\nabla \rJc_f\|$, which is inefficient. In this paper we use $\eta = 0.1$.}

\alglanguage{pseudocode}
\begin{algorithm}
	\caption{Gradient based optimization}
	\label{alg:optim_grad}
	\begin{algorithmic}[1]
		\State input $\tau$, $\cl$, $\epsilon^{(0)}$, $\eta$, $k_\text{max}$, $\bs{u}^{(0)}$, $\nabla \rJc_.(.)$
		\State $(\bs{g}^{(0)}, f) \leftarrow \nabla \rJc_\$(\bs{u}^{(0)})$ using RMSE $\epsilon^{(0)}$. \Comment Save new sample data in $f$
		\For {$k = 0,\ldots, k_\text{max}-1$}
		\If{$\|\bs{g}^{(k)}\| \leq \tau$} \Comment Test convergence using current samples
		\If{$\|\nabla \rJc_\$(\bs{u}^{(k)})\| \leq \tau$} \Comment Test convergence using new random samples
		\State \Return $\bs{u}^{(k)}$
		\EndIf
		\EndIf
		\State Get $\bs{d}^{(k)}$ and $s^{(k)}$, e.g., using NCG-DY and approximate linesearch
		\State $\bs{u}^{(k+1)} \leftarrow \bs{u}^{(k)} + s^{(k)}\bs{d}^{(k)}$
		\If {$\epsilon^{(k)} > \max\{\cl\tau, \cl\|\bs{g}^{(k)}\|\}$ or $\epsilon^{(k)} < \eta^2 \cl\|\bs{g}^{(k)}\| $}
		\label{alg:optim_grad_increase_epsilon}
		\State $\epsilon^{(k+1)} \leftarrow \max\{\cl\tau, \eta \cl\|\bs{g}^{(k)}\|\}$
		\State $(\bs{g}^{(k+1)}, f) \leftarrow \nabla \rJc_\$(\bs{u}^{(k+1)})$ using RMSE $\epsilon^{(k+1)}$. \Comment Save new sample data in $f$
		\Else
		\State $(\bs{g}^{(k+1)}, \epsilon^{(k+1)}) \leftarrow \nabla \rJc_f(\bs{u}^{(k+1)})$ \Comment Calculate expected RMSE $\epsilon$
		\EndIf
		\EndFor
	\end{algorithmic}
\end{algorithm}

\subsubsection{Performance}

For the NCG-DY algorithm, the norm of the gradient is known to converge linearly, i.e., $||\bs{g}^{(k)}|| = \mathcal{O}(e^{-k})$. We assume that this linear convergence is retained even though the sample set that generates the gradient changes at some of the iterations. 
We also assume that a single triple $\rho, \phi, \kappa$ exists such that the three assumptions\footnote{For correlated samples, the second assumption is amended as suggested by (\ref{eq:stoch_error_correlated}), see \S\ref{sec:est/2}.} in Theorem \ref{theorem:MLMC} hold uniformly, i.e., for the same constants implicit in $\lesssim$, for each point on the return level $\Lmax$. This follows from simply considering the constants corresponding to the worst case point. Furthermore, it is reasonable to also assume uniformity from a certain optimization step onward. It was observed already that the variances converge to fixed values, meaning $\phi$ converges to a fixed value. The costs at each level, and therefore also $\kappa$, are constant by design.

\begin{theorem}[MLMC optimization cost]
	\label{theorem:MLMC_optcost}
	Let $\rho, \phi, \kappa$ exist such that the assumptions in Theorem \ref{theorem:MLMC} hold uniformly for each point on the return level $\Lmax$ and from a certain optimization step onward and let $\epsilon^{(0)}$ be independent of $\tau$. If the norm of the gradient converges linearly, the cost $\mathcal{C}_\text{opt}(\tau)$ for Algorithm \ref{alg:optim_grad} to reach a gradient $\|\bs{g}^{(k)}\| \leq \tau$ is
	\begin{equation}
	\mathcal{C}_\text{opt}(\tau) \lesssim 
	\left\{\begin{array}{ll}
	\tau^{-2} &\text{if } \phi > \kappa \\
	\tau^{-2}(\log \tau)^2 &\text{if } \phi = \kappa \\
	\tau^{-2-(\kappa-\phi)/\rho} &\text{if } \phi < \kappa \\
	\end{array}\right., \quad \tau \rightarrow 0.
	\label{eq:tau_cost}
	\end{equation} 
\end{theorem}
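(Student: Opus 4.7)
The plan is to combine three ingredients: linear convergence of NCG-DY, the sample-update logic of Algorithm~\ref{alg:optim_grad} which keeps $\epsilon^{(k)} \eqsim \|\bs{g}^{(k)}\|$ along the iteration, and the cost bound of Theorem~\ref{theorem:MLMC}. Writing $\mathcal{C}_\text{MLMC}(\epsilon)$ for the cost of a single MLMC evaluation at requested RMSE $\epsilon$, the three regimes in~(\ref{eq:tau_cost}) are exactly $\mathcal{C}_\text{MLMC}(\tau)$; hence it suffices to show $\mathcal{C}_\text{opt}(\tau) \lesssim \mathcal{C}_\text{MLMC}(\tau)$ as $\tau \to 0$.

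First I would identify the subsequence of iterations $\{k_j\}_{j=0}^{J}$ at which line~13 draws fresh samples. Linear convergence gives $\|\bs{g}^{(k)}\| \eqsim \eta_0^{k}$ for some fixed $\eta_0 \in (0,1)$ once the algorithm is in the asymptotic regime. Between two successive refreshes $k_j$ and $k_{j+1}$, the RMSE is held essentially constant at $\epsilon^{(k_j)} = \max\{\cl\tau,\,\eta\cl\|\bs{g}^{(k_j)}\|\}$, and line~11 fires again only once $\|\bs{g}^{(k)}\|$ has dropped below $\eta\|\bs{g}^{(k_j)}\|$ (the lower threshold $\eta^2\cl\|\bs{g}^{(k)}\|$ is not activated during monotone descent). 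Linear convergence then bounds $k_{j+1}-k_j$ by a $\tau$-independent constant $\bar{k}\eqsim\log(1/\eta)/\log(1/\eta_0)$, so $\|\bs{g}^{(k_j)}\|\eqsim\eta^{j}\|\bs{g}^{(0)}\|$ and the total number of refresh events until $\|\bs{g}^{(k)}\|\leq\tau$ obeys $J \lesssim \log(\|\bs{g}^{(0)}\|/\tau)$.

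Next, every gradient evaluation — whether fresh (line~13) or reused (line~15) — must resolve the state and adjoint PDEs over the stored sample set, so both cost on the order of $\mathcal{C}_\text{MLMC}(\epsilon^{(k_j)})$. Summing over the $\bar{k}$ iterations in each block and then over blocks,
\begin{align*}
\mathcal{C}_\text{opt}(\tau) \;\lesssim\; \bar{k}\sum_{j=0}^{J}\mathcal{C}_\text{MLMC}\!\bigl(\max\{\cl\tau,\,\eta^{j+1}\cl\|\bs{g}^{(0)}\|\}\bigr).
\end{align*}
Under the uniform hypotheses of Theorem~\ref{theorem:MLMC}, $\mathcal{C}_\text{MLMC}(\epsilon)\lesssim\epsilon^{-p}$ with $p=\max\{2,\,2+(\kappa-\phi)/\rho\}$ (with an additional $(\log\epsilon)^2$ factor in the boundary case $\phi=\kappa$). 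Since $\eta^{-p}>1$, the resulting sum is a geometric series dominated by its final term, whose argument is $\cl\tau$ and whose value is $\mathcal{C}_\text{MLMC}(\tau)$. Multiplying by $\bar{k}$ (a constant) yields $\mathcal{C}_\text{opt}(\tau)\lesssim\mathcal{C}_\text{MLMC}(\tau)$, and reading off the three regimes of Theorem~\ref{theorem:MLMC} gives exactly~(\ref{eq:tau_cost}).

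The main obstacle is controlling the transient phase, both of the variance decay (before $\phi$, $\kappa$ and the implicit constants stabilise) and of the linear convergence itself. Because $\epsilon^{(0)}$ is assumed independent of $\tau$, the transient contributes only a bounded, $\tau$-independent additive cost, which is dominated by $\mathcal{C}_\text{MLMC}(\tau)$ as $\tau\to 0$; making this rigorous requires that the qualifier \emph{``from a certain optimization step onward''} be interpreted in the sense of a single common constant majorising the entire asymptotic regime, which is precisely what the hypothesis grants. A secondary technicality is the extra convergence check in lines~4--6 using fresh samples, but it adds at most one MLMC call at RMSE $\eqsim\tau$, which already has the target cost.
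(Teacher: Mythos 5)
Your argument is correct and is essentially the paper's own proof: both reduce the total cost to a sum of per-iteration MLMC costs $\mathcal{C}_\text{MLMC}(\epsilon^{(k)})\lesssim(\epsilon^{(k)})^{-2}$ (with the $\phi=\kappa$ log factor handled separately), use linear convergence to make that sum geometric, and conclude it is dominated by its last term $\mathcal{C}_\text{MLMC}(\tau)$. Your block decomposition into refresh events is a slightly more faithful reading of the piecewise-constant $\epsilon^{(k)}$ produced by Algorithm \ref{alg:optim_grad}, and your remarks on the transient phase and the fresh-sample convergence check are sensible refinements, but they do not change the substance of the argument.
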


\begin{proof}
	Since $||\bs{g}^{(k)}|| = \mathcal{O}(e^{-k})$, the number of iterations $K$ needed to satisfy the tolerance $\tau$ is $K = \mathcal{O}(-\log \tau)$. Consider the three cases in (\ref{eq:MLMC_cost}). 
	From a certain optimization step onward, the case in which one finds oneself remains the same. We now consider each case separately.
	Assume the cost of a gradient evaluation to be $\mathcal{C}(\bs{g}^{(k)}) \lesssim (\epsilon^{(k)})^{-2}$, i.e., assume the first case in Theorem \ref{theorem:MLMC}. 
	If $\epsilon^{(k)} \simeq \|\bs{g}^{(k)}\|$, the total cost is
	\begin{equation*}
	\mathcal{C}_\text{opt}(\tau) 
	= \sum_{k=0}^{K}\mathcal{C}(\bs{g}^{(k)}) 
	\lesssim \sum_{k=0}^{K}\|\bs{g}^{(k)}\|^{-2} 
	\simeq \sum_{k=0}^{K}e^{2k}
	= \frac{e^{2K+2} -1}{e^2 -1} 
	= \mathcal{O}(\tau^{-2}), \quad \tau \rightarrow 0.
	\end{equation*}
	The case $\phi = \kappa$ in Theorem \ref{theorem:MLMC} has $\mathcal{C}(\bs{g}^{(k)}) \lesssim (\epsilon^{(k)})^{-2}(\log \epsilon^{(k)})^2$ and yields
	\begin{equation*}
	\sum_{k=0}^{K}\|\bs{g}^{(k)}\|^{-2} (\log \|\bs{g}^{(k)}\|)^2
	\simeq \sum_{k=0}^{K}k^2e^{2k}
	= \mathcal{O}(K^2e^{2K}) 
	= \mathcal{O}(\tau^{-2}(\log\tau)^2), \quad \tau \rightarrow 0.
	\end{equation*}
	The third case is mathematically analogous to the first. 
\end{proof}

This cost is proportional to the cost of a single gradient evaluation with RMSE $\tau$. Note that if instead the tolerance would be kept fixed, i.e., $\epsilon^{(0)}=\ldots=\epsilon^{(k)} \simeq \tau, \tau \rightarrow 0$, the total cost would amount to
\begin{equation*}
\mathcal{C}_\text{opt}(\tau) = \mathcal{C}(\bs{g}^{(k)}) \mathcal{O}(-\log \tau ) 
\lesssim 
\left\{\begin{array}{ll}
-\tau^{-2} \log \tau &\text{if } \phi > \kappa \\
-\tau^{-2}(\log \tau)^3 &\text{if } \phi = \kappa \\
-\tau^{-2-(\kappa-\phi)/\rho} \log \tau &\text{if } \phi < \kappa \\
\end{array}\right. , \quad \tau \rightarrow 0.
\label{eq:tau_cost_inef}
\end{equation*}

\subsection{Hessian based optimization}
For a general problem, the cost functional can be approximated at a certain iteration as
\begin{equation*}
\rJ(\bs{u}^{(k)} + \Delta\bs{u}) \approx \rJ(\bs{u}^{(k)}) + (\nabla \rJ(\bs{u}^{(k)}))^T\Delta\bs{u}^{(k)} + \frac{1}{2}\Delta\bs{u}^T(\hess{\rJ(\bs{u}^{(k)})})\Delta\bs{u}.
\end{equation*}
For our quadratic model problem, this approximation is exact. Taking the derivative to $u$ and setting it to $\bs{0}$ yields
\begin{equation}
(\hess{\tilde{J}(\bs{u}^{(k)})})\Delta\bs{u} = -\nabla \tilde{J}(\bs{u}^{(k)}). \label{eq:hess_solve}
\end{equation}
Solving for $\Delta \bs{u}$ constitutes a Newton step and produces the next point $\bs{u}^{(k+1)} = \bs{u}^{(k)} + \Delta\bs{u}$.
The model problem converges in one single Newton iteration. Of course, in general, multiple Newton steps are required.

As was already noted, it is infeasible to directly solve (\ref{eq:hess_solve}) because the Hessian is very large and dense. Therefore, a conjugate gradient (CG) method is employed, as described in, e.g., \cite{trefethen1997}. This method requires only Hessian vector products and solves symmetric positive definite systems of equations. 
It can be shown that for a quadratic problem, the NCG method and a single solve of (\ref{eq:hess_solve}) using the CG method are equivalent, assuming the line searches are exact. A concise overview of these relationships can be found in, e.g., \cite{nazareth2009}. 
This allows us to reuse some results from the previous section. Notably, Figure \ref{fig:new_vs_fixed} can be reinterpreted as being about the residual $\bs{r}^{(i)}=\smash{(\hess{\rJc(\bs{u}^{(k)})})\Delta\bs{u}^{(i)} + \nabla \rJc(\bs{u}^{(k)})}$ for iteration $i$ of the CG method. Analogously, we conclude that the CG iterations can only be meaningful as long as $\epsilon \leq \cl\|\bs{r}^{(i)}\|$, with $\cl$ having the same meaning as before, leading to a residual tolerance of $\cl^{-1}\epsilon$.
A possible algorithm is then given below as Algorithm \ref{alg:optim_hess}. It is constructed such that the CG method can be thought of as a black box solver (which allows one to swap it with other iterative methods). The sample set used in a single Newton iteration is determined by the RMSE $\epsilon$ requested for the gradient. The first iteration uses a large RMSE $\epsilon^{(0)}$ to cheaply get somewhat close to the optimizer $\opt{\bs{u}}$. Subsequent Newton steps lower the RMSE by a factor $\eta$ (we take $\eta = 0.2$) until $\cl\tau$ (we take $\cl=1$) is reached. 

\alglanguage{pseudocode}
\begin{algorithm}
	\caption{Hessian based optimization}
	\label{alg:optim_hess}
	\begin{algorithmic}[1]
		\State input $\tau$, $\cl$, $\epsilon^{(0)}$, $\eta$, $k_\text{max}$, $\bs{u}^{(0)}$, $\nabla \rJc_.(.)$, $\hess{\rJc_.(.)}[.]$ \label{alg:optim_hess.eps1}
		\For {$k = 0,\ldots, k_\text{max}-1$}
		\State $(\bs{g}^{(k)}, f) \leftarrow \nabla \rJc_\$(\bs{u}^{(k)})$ using RMSE $\epsilon^{(k)}$. \label{alg:optim_hess.fgen}
		\If{$\|\bs{g}^{(k)}\| \leq \tau$}
		\If{$\|\nabla \rJc_\$(\bs{u}^{(k)})\| \leq \tau$}
		\State \Return $\bs{u}^{(k)}$
		\EndIf
		\EndIf
		\State $\Delta \bs{u} \leftarrow CG(\hess{\rJc_f(\bs{u}^{(k)})}[.], -\bs{g}^{(k)})$ with residual tolerance $\cl^{-1}\epsilon^{(k)}$.
		\State $\bs{u}^{(k+1)} \leftarrow \bs{u}^{(k)} + \Delta\bs{u}$
		\State $\epsilon^{(k+1)} \leftarrow \max\{\cl\tau, \eta \epsilon^{(k)}\}$ \label{alg:optim_hess.eps2}
		\EndFor
	\end{algorithmic}
\end{algorithm}

\subsubsection{Performance}
Close to the solution, the Hessian does not change significantly. Consider the sequence of all CG iterations during all the Newton steps in the algorithm and index it with $i$. The total amount of steps until convergence
is $I = \mathcal{O}(-\log \tau)$.
From the algorithm it is clear that $\epsilon^{(i)} \lesssim \|\bs{r}^{(i)}\|$ (where $\epsilon^{(i)}$ now denotes the value of $\epsilon$ during CG iteration $i$).
The cost of a single CG iteration, denoted $\mathcal{C}(\text{CG}^{(i)})$, is dominated by the Hessian vector product, which uses the samples $f$ in line \ref{alg:optim_hess.fgen}. Hence, this cost is again given by Theorem \ref{theorem:MLMC} as a function of $\epsilon^{(i)}$. Everything thus being analogous to the gradient case, working out $\mathcal{C}_\text{opt}(\tau) 
= \sum_{i=0}^{I}\mathcal{C}(\text{CG}^{(i)}) $ leads again to (\ref{eq:tau_cost}).

\section{Numerical results}
This section contains results of a set of numerical experiments in which the gradient and Hessian based optimization algorithms are applied to the model problem. The algorithms are also tested on a nonlinear problem in Section \ref{sec:num.3}. The target function is set as 
\begin{equation*}
y_D(x) = \begin{cases} 
1 & x \in [0.25, 0.75] \times [0.25, 0.75]\\
0 & \text{otherwise}
\end{cases}
\end{equation*}
The following table contains all parameters that we fix for all experiments.
\begin{equation*}
\small
\begin{array}{c | c c c}
\quad\quad \text{uncertainty} \quad\quad\quad & \multicolumn{3}{c}{\text{solver parameters}}  \\ \hline
\begin{aligned}
\lambda &= 0.3 \\
\nkl &= 500
\end{aligned} &
\begin{aligned}
m_0 &= 8 \\
m_{\Lmax} &= 256
\end{aligned} &
\begin{aligned}
\cl &= 1 \\
\eta &= 0.2
\end{aligned} &
\begin{aligned}
\epsilon^{(0)} &=\num{1e-2} \\ 
\bs{u}^{(0)} &= \bs{0} 
\end{aligned}
\end{array}
\label{par:2}
\end{equation*}
All calculations are performed in Matlab on an Intel(R) Core(TM) i5-5200U CPU @ 2.20GHz. The algebraic system of equations resulting from the finite volume discretization of the PDEs are solved using Matlab's sparse matrix solver. This can be shown experimentally to yield $\kappa = 2.26$ in Theorems \ref{theorem:MLMC} and \ref{theorem:MLMC_optcost} for our 2D problem. In these experiments $\phi > \kappa$, and therefore the dominant cost is on the coarsest level.

\subsection{Problem 1}
Problem 1 is further defined by
\begin{equation}
\alpha = \num{1e-6}, \gamma = 1, \tau = \num{1e-4}, \sigma^2 = 0.1,
\end{equation}
see also Figure \ref{fig:2Dsamples}. The problem is solved using Algorithm \ref{alg:optim_grad} and Algorithm \ref{alg:optim_hess}. The convergence behavior of both methods is visualized in Figure \ref{fig:exp1/conv}. The total time (with all overhead included) was $1542s$ (gradient based) and $1989s$ (Hessian based). \cref{tab:prob1.grad,tab:prob1.hess} below give sampling information each time new samples are generated for both methods. At the finer levels, the number of initial samples sometimes appears, meaning that no additional samples were required beyond those initial samples. The timings are calculated as the average wallclock time for a single NCG or CG iteration that uses the indicated number of samples. For comparison, taking $n_0$ samples on level $4$, as would be approximately the case for the classical MC method, would take an estimated $19$hrs for a single NCG iteration and $12$hrs for a single CG iteration for $\epsilon = \num{1e-4}$.
\begin{table}[H]
	\vspace{-0.5cm}
	\caption{Behaviour of gradient based optimization (Algorithm \ref{alg:optim_grad}).}
	\label{tab:prob1.grad}
	\centering
	\vspace{-0.75cm}
	\begin{equation*}
	\small
	\begin{array}{l|lllllllll}
	k & \epsilon^{(k)} & n_0 & n_1 & n_2 & n_3 & n_4 & n_5 & \text{estimate of } \rho & t^{(k)} [s]  \\
	\hline
	0 & \num{1e-2} & 140 & 76 & 44 & & & & 2.0237& 2.05\\
	4 & \num{2.24e-4} & 17150 &	1512 & 80 &	28 & 20 &  & 1.5824 & 47.49\\
	15 & \num{1e-4} & 98452 & 9156 &	940	& 118 &	20 &  & 1.5825 & 248.84
	\end{array}
	\end{equation*}
	\vspace{-0.75cm}
\end{table}
\begin{table}[H]
	\vspace{-0.5cm}
	\caption{Behaviour of Hessian based optimization (Algorithm \ref{alg:optim_hess}).}
	\label{tab:prob1.hess}
	\centering
	\vspace{-0.75cm}
	\begin{equation*}
	\small
	\begin{array}{l|lllllllll}
	i & \epsilon^{(i)} & n_0 & n_1 & n_2 & n_3 & n_4 & n_5 & \text{estimate of } \rho & t^{(i)} [s]  \\
	\hline
	0 & \num{1e-2} & 140 & 76 & 44 & & & & 1.8355 & 4.12\\
	2 & \num{2e-3} & 140 & 76 & 44 & & & & 1.7030 & 4.36\\
	4 & \num{4e-4} & 5964 &	521	& 44 & 28 &	20 &  & 1.5905 & 14.45\\
	13 & \num{1e-4} & 96159	& 9010 & 821 & 93 &	22 &  & 1.6195 & 153.42
	\end{array}
	\end{equation*}
	\vspace{-0.75cm}
\end{table}
\begin{figure}[h]
	\centering
	\sethw{0.25}{0.4}
%
%
\definecolor{mycolor1}{rgb}{0.00000,0.44700,0.74100}%
\definecolor{mycolor2}{rgb}{0.85000,0.32500,0.09800}%
\definecolor{mycolor3}{rgb}{0.92900,0.69400,0.12500}%
\definecolor{mycolor4}{rgb}{0.49400,0.18400,0.55600}%
\definecolor{mycolor5}{rgb}{0.46600,0.67400,0.18800}%
\begin{tikzpicture}

\begin{axis}[%
width=\figurewidth,
height=\figureheight,
at={(0\figurewidth,0\figureheight)},
scale only axis,
xmin=0,
xmax=25,
xlabel style={font=\color{white!15!black}},
xlabel={iteration $k$ (NCG) or $i$ (CG)},
ymode=log,
ymin=6e-05,
ymax=0.1,
yminorticks=true,
axis background/.style={fill=white},
legend style={legend cell align=left, align=left, draw=white!15!black}
]
\addplot [color=mycolor1]
  table[row sep=crcr]{%
0	0.0411935332566013\\
1	0.00391544339653357\\
2	0.00723960300757659\\
3	0.00112045403731143\\
4	0.00149441057005802\\
5	0.00151773158007462\\
6	0.00120501652572579\\
7	0.000912049632018788\\
8	0.000409179044355227\\
9	0.000882801696203746\\
10	0.00146716215161169\\
11	0.000528535962586928\\
12	0.000369145266086674\\
13	0.00125316237936591\\
14	0.000125412514034196\\
15	0.00034324980360404\\
16	0.000354347983324125\\
17	0.000202595295635027\\
18	7.96149268914166e-05\\
};
\addlegendentry{$\|\bs{g}^{(k)}\|$}

\addplot [color=mycolor1, dashed]
  table[row sep=crcr]{%
0	0.01\\
1	0.0016340851859365\\
2	0.00164729220507308\\
3	0.00215289997049437\\
4	0.000224090807462286\\
5	0.00021497361304527\\
6	0.000205183005921987\\
7	0.000199790914216185\\
8	0.00022318789364201\\
9	0.000221930804341641\\
10	0.000229439600956448\\
11	0.00022371041880346\\
12	0.000223775997821223\\
13	0.000233079719333343\\
14	0.000228507322372466\\
15	0.0001\\
16	9.75530948086056e-05\\
17	9.82517167814688e-05\\
18	9.76442688802658e-05\\
};
\addlegendentry{$\epsilon^{(k)}$}

\addplot[only marks, mark=x, mark options={}, mark size=2.5000pt, draw=mycolor1, forget plot] table[row sep=crcr]{%
	x	y\\
	18	9.913e-05\\
};

\addplot [color=mycolor2]
  table[row sep=crcr]{%
0	0.0419553177976711\\
1	0.00390008826032591\\
2	0.00389731697381657\\
3	0.00186809102822704\\
4	0.00206007295578209\\
5	0.00146144307324256\\
6	0.0014298356672655\\
7	0.00084619768314174\\
8	0.00065593761901815\\
9	0.000649435043963648\\
10	0.000771595295128642\\
11	0.000713022016569242\\
12	0.00037566750499135\\
13	0.000365452468520957\\
14	0.00117129904377967\\
15	0.000525530459510173\\
16	0.000368445044537775\\
17	0.000188055841053145\\
18	0.000529529525737019\\
19	0.000354400377044961\\
20	0.000167088312309857\\
21	0.000142231633026159\\
22	0.000131018164885256\\
23	0.000158133219523619\\
24	8.16162635457512e-05\\
};
\addlegendentry{$\|\bs{r}^{(i)}\|$}

\addplot [color=mycolor2, dashed]
  table[row sep=crcr]{%
0	0.01\\
1	0.01\\
2	0.002\\
3	0.002\\
4	0.0004\\
5	0.0004\\
6	0.0004\\
7	0.0004\\
8	0.0004\\
9	0.0004\\
10	0.0004\\
11	0.0004\\
12	0.0004\\
13	0.0001\\
14	0.0001\\
15	0.0001\\
16	0.0001\\
17	0.0001\\
18	0.0001\\
19	0.0001\\
20	0.0001\\
21	0.0001\\
22	0.0001\\
23	0.0001\\
24	0.0001\\
};
\addlegendentry{$\epsilon^{(i)}$}

\addplot[only marks, mark=x, mark options={}, mark size=2.5000pt, draw=mycolor2, forget plot] table[row sep=crcr]{%
	x	y\\
	24	9.117e-05\\
};
\end{axis}
\end{tikzpicture}%
	\caption{Behavior of Algorithms \ref{alg:optim_grad} and \ref{alg:optim_hess} for Problem 1. The crosses ($\times$) indicate the results of convergence tests performed using new samples. }
	\label{g:exp1/conv}
\end{figure}
Figure \ref{fig:exp1/g_lvl} shows (a cross section of) the contributions to the gradient on each level. Note that the smoothness of these contributions follows from the discussion about Figure \ref{fig:grids}. Since the gradient must converge to zero, the contributions cancel each other out more effectively in the later iterations. Nevertheless, the variances remain much higher on the coarsest levels (the behavior is similar to the behavior observed in Figure \ref{fig:variance_evolution}). Therefore, removing the coarsest level (i.e. setting $m_0 = 16$ instead) would not improve performance. 
\begin{figure}[h]
	\centering
	\begin{subfigure}[t]{.45\textwidth}
		\centering
		\sethw{0.5}{0.8}
		\input{fig/exp1/g_lvl_mid.tex}
		\caption{Iteration $k = 5$}
		\label{fig:exp1/g_lvl_mid}
	\end{subfigure}
	\quad
	\begin{subfigure}[t]{.45\textwidth}
		\centering
		\sethw{0.5}{0.8}
		\input{fig/exp1/g_lvl_opt.tex}
		\caption{Iteration $k = 18$ (last)}
		\label{fig:exp1/g_lvl_opt}
	\end{subfigure}
	\definecolor{mycolor1}{rgb}{0.00000,0.44700,0.74100}%
	\definecolor{mycolor2}{rgb}{0.85000,0.32500,0.09800}%
	\definecolor{mycolor3}{rgb}{0.92900,0.69400,0.12500}%
	\definecolor{mycolor4}{rgb}{0.49400,0.18400,0.55600}%
	\definecolor{mycolor5}{rgb}{0.46600,0.67400,0.18800}%
	\caption{Cross section of $\bs{g}^{(k)} = \sum_{\ell=0}^{L}\chlvl{\ell}{\Lmax}\YMC_{\ell,n_\ell}$ (\ref{tikz:g_total}) and of contributions $\chlvl{\ell}{\Lmax}\YMC_{\ell,n_\ell}$ at levels $0, \ldots, L$ (\ref{tikz:g_0},\ref{tikz:g_1},\ref{tikz:g_2},\ref{tikz:g_3},\ref{tikz:g_4}) for Problem 1.}
	\label{fig:exp1/g_lvl}
\end{figure}

\subsection{Problem 2}
Problem 2 is further defined by
\begin{equation}
\alpha = \num{1e-5}, \gamma = 0, \tau = \num{1e-4}, \sigma^2 = 0.5.
\end{equation}
Figure \ref{fig:exp2/conv} and \cref{tab:prob2.grad,tab:prob2.hess} describe the behavior of the optimization algorithms in the same way as before. The total time (with all overhead included) was $6973s$ (gradient based) and $5114s$ (Hessian based). For $\epsilon = \num{1e-4}$, taking $n_0$ samples at level $5$ instead, would take approximately $427$hrs for a single NCG iteration and $259$hrs for a single CG iteration.
\begin{table}[H]
	\vspace{-0.5cm}
	\caption{Behaviour of gradient based optimization (Algorithm \ref{alg:optim_grad}).}
	\label{tab:prob2.grad}
	\centering
	\vspace{-0.75cm}
	\begin{equation*}
	\small
	\begin{array}{l|lllllllll}
	k & \epsilon^{(k)} & n_0 & n_1 & n_2 & n_3 & n_4 & n_5 & \text{estimate of } \rho & t^{(k)} [s]  \\
	\hline
	0 & \num{1e-2} & 140 & 76 & 44 & & & & 2.0237& 2.06\\
	4 & \num{3.51e-4} & 35563 &	3220 & 136 & 28 & 20 &  & 1.5824 & 93.44\\
	9 & \num{1e-4} & 375256 & 38259 & 2082	& 135 &	21 & 16 & 1.5825 & 1092.71
	\end{array}
	\end{equation*}
	\vspace{-0.75cm}
\end{table}
\begin{table}[H]
	\vspace{-0.5cm}
	\caption{Behaviour of Hessian based optimization (Algorithm \ref{alg:optim_grad}).}
	\label{tab:prob2.hess}
	\centering
	\vspace{-0.75cm}
	\begin{equation*}
	\small
	\begin{array}{l|lllllllll}
	i & \epsilon^{(i)} & n_0 & n_1 & n_2 & n_3 & n_4 & n_5 & \text{estimate of } \rho & t^{(i)} [s]  \\
	\hline
	0 & \num{1e-2} & 140 & 76 & 44 & & & & 1.9102 & 8.89\\
	2 & \num{2e-3} & 393 & 76 & 44 & & & & 2.0975 & 11.77\\
	5 & \num{4e-4} & 33063 & 6980 &	193 & 28 & 20 &  & 1.7029 & 76.20\\
	10 & \num{1e-4} & 388834 & 37023 &	1747 & 255 & 56 & 16 & 1.7818 & 668.58
	\end{array}
	\end{equation*}
	\vspace{-0.75cm}
\end{table}
\begin{figure}[h]
	\centering
	\sethw{0.25}{0.4}
%
%
\definecolor{mycolor1}{rgb}{0.00000,0.44700,0.74100}%
\definecolor{mycolor2}{rgb}{0.85000,0.32500,0.09800}%
\definecolor{mycolor3}{rgb}{0.92900,0.69400,0.12500}%
\definecolor{mycolor4}{rgb}{0.49400,0.18400,0.55600}%
\definecolor{mycolor5}{rgb}{0.46600,0.67400,0.18800}%
\definecolor{mycolor6}{rgb}{0.30100,0.74500,0.93300}%
\begin{tikzpicture}

\begin{axis}[%
width=\figurewidth,
height=\figureheight,
at={(0\figurewidth,0\figureheight)},
scale only axis,
xmin=0,
xmax=16,
xlabel style={font=\color{white!15!black}},
xlabel={iteration $k$ (NCG) or $i$ (CG)},
ymode=log,
ymin=6e-05,
ymax=0.1,
yminorticks=true,
axis background/.style={fill=white},
legend style={legend cell align=left, align=left, draw=white!15!black}
]
\addplot [color=mycolor1]
  table[row sep=crcr]{%
0	0.0409571692293628\\
1	0.00408813719351592\\
2	0.00604874404273697\\
3	0.00175292845153977\\
4	0.00147831667153059\\
5	0.00132256246945104\\
6	0.0013053682835006\\
7	0.000858035404812844\\
8	0.000252735211722797\\
9	0.000181642816888737\\
10	0.000181252031606748\\
11	0.000288641299175944\\
12	0.000177566155148581\\
13	8.81952803747017e-05\\
14	6.59479991348626e-05\\
};
\addlegendentry{$\|\bs{g}^{(k)}\|$}

\addplot [color=mycolor1, dashed]
  table[row sep=crcr]{%
0	0.01\\
1	0.00212063820836367\\
2	0.00230556947169498\\
3	0.00315321795553341\\
4	0.000350585690307954\\
5	0.000303312348170532\\
6	0.000287413767534894\\
7	0.000285889959105501\\
8	0.000304930312739127\\
9	0.0001\\
10	9.47703423147316e-05\\
11	9.4100177634015e-05\\
12	9.49845362048961e-05\\
13	9.49000276134469e-05\\
14	9.51608985258692e-05\\
};
\addlegendentry{$\epsilon^{(k)}$}

\addplot[only marks, mark=x, mark options={}, mark size=2.5000pt, draw=mycolor1, forget plot] table[row sep=crcr]{%
x	y\\
13	0.000101329532160805\\
14	7.37948500924026e-05\\
};

\addplot [color=mycolor2]
  table[row sep=crcr]{%
0	0.0433259574428281\\
1	0.00417933751093853\\
2	0.00446889148849073\\
3	0.00731372212883611\\
4	0.0011213982529147\\
5	0.00165442931884132\\
6	0.00169042114460802\\
7	0.00147497615798193\\
8	0.000823605016331796\\
9	0.000354938180547665\\
10	0.000411780161910956\\
11	0.00061366119021541\\
12	0.000343849117239164\\
13	0.000423702513352949\\
14	0.000176317344134711\\
15	0.000121260117034605\\
16	5.52269858676175e-05\\
};
\addlegendentry{$\|\bs{r}^{(i)}\|$}

\addplot [color=mycolor2, dashed]
  table[row sep=crcr]{%
0	0.01\\
1	0.01\\
2	0.002\\
3	0.002\\
4	0.002\\
5	0.0004\\
6	0.0004\\
7	0.0004\\
8	0.0004\\
9	0.0004\\
10	0.0001\\
11	0.0001\\
12	0.0001\\
13	0.0001\\
14	0.0001\\
15	0.0001\\
16	0.0001\\
};
\addlegendentry{$\epsilon^{(i)}$}

\addplot[only marks, mark=x, mark options={}, mark size=2.5000pt, draw=mycolor2, forget plot] table[row sep=crcr]{%
x	y\\
16	8.426e-05\\
};
\end{axis}
\end{tikzpicture}%
	\caption{Behavior of Algorithms \ref{alg:optim_grad} and \ref{alg:optim_hess} for Problem 2. The crosses indicate convergence tests performed using new samples. }
	\label{fig:exp2/conv}
\end{figure}

\begin{figure}[t]
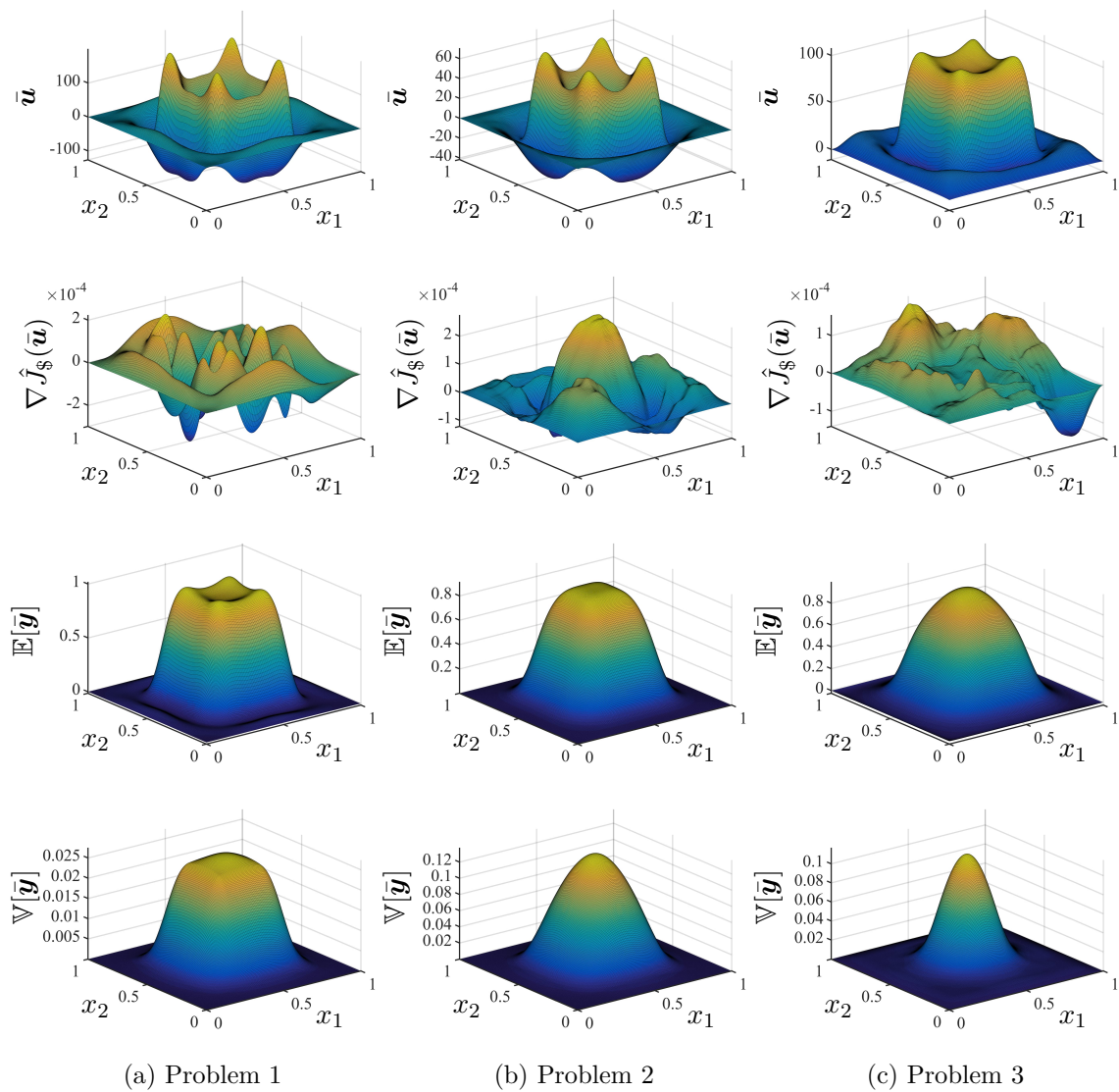

	\hspace{-0.04\textwidth}
	\begin{subfigure}[t]{.36\textwidth}
		\centering
		\sethw{0.88}{0.88}
		\input{fig/exp1/u.tex} \\
		\input{fig/exp1/g.tex} \\
		\input{fig/exp1/Ey.tex} \\
		\input{fig/exp1/Vy.tex} \\
		\label{fig:exp1}
		\caption{Problem 1}
	\end{subfigure}
	\hspace{-0.04\textwidth}
	\begin{subfigure}[t]{.36\textwidth}
		\centering
		\sethw{0.88}{0.88}
		\input{fig/exp2/u.tex} \\
		\input{fig/exp2/g.tex} \\
		\input{fig/exp2/Ey.tex} \\
		\input{fig/exp2/Vy.tex} \\
		\label{fig:exp2}
		\caption{Problem 2}
	\end{subfigure}
	\hspace{-0.04\textwidth}
	\begin{subfigure}[t]{.36\textwidth}
		\centering
		\sethw{0.88}{0.88}
		\input{fig/exp3/u.tex} \\
		\input{fig/exp3/g.tex} \\
		\input{fig/exp3/Ey.tex} \\
		\input{fig/exp3/Vy.tex} \\
		\label{fig:exp3}
		\caption{Problem 3}
	\end{subfigure}
	\caption{Optimization results. $256 \times 256$.}
\end{figure}

\subsection{Problem 3}
\label{sec:num.3}
Consider as an example the following nonlinear extension to the model problem:
\begin{equation}
-\nabla \cdot(k\nabla y) + f(y) = \beta u \quad \mbox{on } D \quad \text{with} \quad y = 0 \quad \mbox{on } \boundary{D}.
\label{eq:c_nonlin}
\end{equation}
The function $f$ is some nonlinear reaction term.
From (\ref{eq:optcond_lagr_general}) it is clear that only the term $(\diffp{c}{y})^*\lm$ needs updating, leading to
\begin{equation}
\left\{\begin{array}{rcll}
-\nabla \cdot(k\nabla y)+ f(y)& = &\beta u &\quad \mbox{on } D\\
-\nabla \cdot(k\nabla p)+f'(y)p& = &2(y -y_D) + 2\gamma(y-\mean{y}) &\quad \mbox{on } D\\
\nabla \rJ(u) & = & 2\alpha u + \beta \mean{p}
\end{array}\right.\label{eq:gradient_nonlin}
\end{equation}
The expression $f'$ denotes the derivative of $f$, which is again localized ($f'(y)(x) = g'(y(x))$). The derivation of the Hessian equations is slightly more involved. We only state the result:
\begin{equation}
\left\{\begin{array}{rcll}
-\nabla \cdot(k\nabla \de{y})+ f'(y)\de{y} &=& \beta\de{u}. &\quad \mbox{on } D\\
-\nabla \cdot(k\nabla \de{p})+ f'(y)\de{p} +f''(y)p\de{y} &=& 2\de{y} + 2\gamma(y-\mean{\de{y}}) &\quad \mbox{on } D\\
\hess{\tilde{J}(u)}[\de{u}] & = & 2\alpha \de{u} + \beta \mean{\de{p}}
\end{array}\right.\label{eq:Hessian_nonlin}
\end{equation}
It is important to note that the adjoint equation is always linear in $p$ for the gradient and $\de{p}$ for the Hessian. Therefore, the sampling methods from Section \ref{section:samples} can always be used.

Consider as an example the nonlinear term $f(y) = 20 + e^{5y}$ and the parameters
\begin{equation}
\alpha = \num{1e-5}, \gamma = 1, \tau = \num{5e-5}, \sigma^2 = 0.5.
\end{equation}
Figure \ref{fig:exp3/conv} shows the convergence plot. Figure \ref{fig:exp3/g_lvl} shows a cross section of the contributions to the gradient on each level. The details obtained during the optimization  are described in \cref{tab:prob2.grad,tab:prob2.hess} for both algorithms. The total time (with all overhead included) was $2607s$ (gradient based) and $8307s$ (Hessian based). For $\epsilon = \num{5e-5}$, a single NCG and CG iteration would take approximately $73$hrs and $69$hrs respectively.
\begin{table}[H]
	\vspace{-0.5cm}
	\caption{Behaviour of gradient based optimization (Algorithm \ref{alg:optim_grad}).}
	\label{tab:prob3.grad}
	\centering
	\vspace{-0.75cm}
	\begin{equation*}
	\small
	\begin{array}{l|lllllllll}
	k & \epsilon^{(k)} & n_0 & n_1 & n_2 & n_3 & n_4 & n_5 & \text{estimate of } \rho & t^{(k)} [s]  \\
	\hline
	0 & \num{1e-2} & 923 & 83 & 44 & & & & 1.6309& 10.19\\
	7 & \num{7.32e-5} & 13369 &	3093 &	391 & 43 & 20 &	16 & 1.6069 & 341.36\\
	12 & \num{5e-5} & 33347	& 11435	& 711 &	77 & 20	& 16 & 1.5983 & 508.27
	\end{array}
	\end{equation*}
	\vspace{-0.75cm}
\end{table}
\begin{table}[H]
	\vspace{-0.5cm}
	\caption{Behaviour of Hessian based optimization (Algorithm \ref{alg:optim_grad}).}
	\label{tab:prob3.hess}
	\centering
	\vspace{-0.75cm}
	\begin{equation*}
	\small
	\begin{array}{l|lllllllll}
	i & \epsilon^{(i)} & n_0 & n_1 & n_2 & n_3 & n_4 & n_5 & \text{estimate of } \rho & t^{(i)} [s]  \\
	\hline
	0 & \num{1e-2} & 826   & 76   & 44  & & & & 1.9102 & 9.95\\
	2 & \num{2e-3} & 140   & 76   & 44 & & & & 2.0975 & 7.55\\
	4 & \num{4e-4} & 305   & 108  & 44   & 28 &  &  & 1.5521 & 14.54\\
	9 & \num{8e-5} & 14357 & 3211 & 205  & 28   & 20 &  & 1.5931 & 153.21\\
	16 & \num{5e-5} & 34181 & 7623 & 850  & 345  & 44 & 16 & 1.5964 & 593.68\\
	20 & \num{5e-5} & 24287 & 7394 & 2647 & 122  & 20 & 16 & 1.6211 & 727.11\\
	23 & \num{5e-5} & 26008 & 7058 & 1597 & 1057 & 20 & 16 & 1.5856 & 996.88\\
	\end{array}
	\end{equation*}
	\vspace{-0.75cm}
\end{table}

\begin{figure}[h]
	\centering
	\sethw{0.25}{0.4}
%
%
\definecolor{mycolor1}{rgb}{0.00000,0.44700,0.74100}%
\definecolor{mycolor2}{rgb}{0.85000,0.32500,0.09800}%
\definecolor{mycolor3}{rgb}{0.92900,0.69400,0.12500}%
\definecolor{mycolor4}{rgb}{0.49400,0.18400,0.55600}%
\definecolor{mycolor5}{rgb}{0.46600,0.67400,0.18800}%
\definecolor{mycolor6}{rgb}{0.30100,0.74500,0.93300}%
\begin{tikzpicture}

\begin{axis}[%
width=\figurewidth,
height=\figureheight,
at={(0\figurewidth,0\figureheight)},
scale only axis,
xmin=0,
xmax=25,
xlabel style={font=\color{white!15!black}},
xlabel={iteration $k$ (NCG) or $i$ (CG)},
ymode=log,
ymin=1e-05,
ymax=1,
yminorticks=true,
axis background/.style={fill=white},
legend style={legend cell align=left, align=left, draw=white!15!black}
]
\addplot [color=mycolor1]
  table[row sep=crcr]{%
0	0.159145295458656\\
1	0.0114854880922617\\
2	0.00314547568761071\\
3	0.00107860485744996\\
4	0.000823768203957275\\
5	0.000756043867508636\\
6	0.000365831892493634\\
7	0.000235463642439068\\
8	0.000193473413044486\\
9	0.000143740098407997\\
10	8.5564537684863e-05\\
11	5.06442294592341e-05\\
12	3.67708132790995e-05\\
13	0\\
14	0\\
15	0\\
16	0\\
17	0\\
};
\addlegendentry{$\|\bs{g}^{(k)}\|$}

\addplot [color=mycolor1, dashed]
  table[row sep=crcr]{%
0	0.01\\
1	0.000527870728842245\\
2	0.000299444874202945\\
3	0.000368578411604147\\
4	0.000378432395075163\\
5	0.000414904821662803\\
6	0.000384220786094473\\
7	7.31663784987268e-05\\
8	6.20080016028859e-05\\
9	6.44770509970521e-05\\
10	6.14760906961756e-05\\
11	6.30533837216114e-05\\
12	5e-05\\
13	0\\
14	0\\
15	0\\
16	0\\
17	0\\
};
\addlegendentry{$\epsilon^{(k)}$}

\addplot[only marks, mark=x, mark options={}, mark size=2.5000pt, draw=mycolor1, forget plot] table[row sep=crcr]{%
x	y\\
0	0\\
1	0\\
2	0\\
3	0\\
4	0\\
5	0\\
6	0\\
7	0\\
8	0\\
9	0\\
10	0\\
11	0\\
12	4.66151665996688e-05\\
13	0\\
14	0\\
15	0\\
16	0\\
17	0\\
};
\addplot [color=mycolor2]
  table[row sep=crcr]{%
0	0.150370171516503\\
1	0.00165839949508266\\
2	0.0112614879437814\\
3	0.00138634093839479\\
4	0.00390013078018082\\
5	0.00180439557742853\\
6	0.000734902680360581\\
7	0.000402684159758143\\
8	0.000377742516300003\\
9	0.00177935344825797\\
10	0.000706759370360477\\
11	0.000192850407186331\\
12	0.000222498340560663\\
13	0.000104555424708768\\
14	8.17983163263399e-05\\
15	3.02067234515968e-05\\
16	0.000418227299924788\\
17	0.000283123886047705\\
18	6.49960230166316e-05\\
19	4.28816266064359e-05\\
20	0.000103774430911108\\
21	9.97791077124742e-05\\
22	3.49428509146829e-05\\
23	6.08319333729144e-05\\
24	3.71706851651653e-05\\
};
\addlegendentry{$\|\bs{r}^{(i)}\|$}

\addplot [color=mycolor2, dashed]
  table[row sep=crcr]{%
0	0.01\\
1	0.01\\
2	0.002\\
3	0.002\\
4	0.0004\\
5	0.0004\\
6	0.0004\\
7	0.0004\\
8	0.0004\\
9	8e-05\\
10	8e-05\\
11	8e-05\\
12	8e-05\\
13	8e-05\\
14	8e-05\\
15	8e-05\\
16	5e-05\\
17	5e-05\\
18	5e-05\\
19	5e-05\\
20	5e-05\\
21	5e-05\\
22	5e-05\\
23	5e-05\\
24	5e-05\\
};
\addlegendentry{$\epsilon^{(i)}$}

\addplot[only marks, mark=x, mark options={}, mark size=2.5000pt, draw=mycolor2] table[row sep=crcr]{%
x	y\\
24	2.972e-05\\
};
\end{axis}
\end{tikzpicture}%
	\caption{Behavior of Algorithms \ref{alg:optim_grad} and \ref{alg:optim_hess} for Problem 3. The crosses indicate convergence tests performed using new samples. }
	\label{fig:exp3/conv}
\end{figure}

\begin{figure}[h]
	\centering
	\begin{subfigure}[t]{.45\textwidth}
		\centering
		\sethw{0.5}{0.8}
		\input{fig/exp3/g_lvl_mid.tex}
		\caption{Iteration $k = 7$}
		\label{fig:exp3/g_lvl_mid}
	\end{subfigure}
	\quad
	\begin{subfigure}[t]{.45\textwidth}
		\centering
		\sethw{0.5}{0.8}
		\input{fig/exp3/g_lvl_opt.tex}
		\caption{Iteration $k = 12$ (last)}
		\label{fig:exp3/g_lvl_opt}
	\end{subfigure}
	\definecolor{mycolor1}{rgb}{0.00000,0.44700,0.74100}%
	\definecolor{mycolor2}{rgb}{0.85000,0.32500,0.09800}%
	\definecolor{mycolor3}{rgb}{0.92900,0.69400,0.12500}%
	\definecolor{mycolor4}{rgb}{0.49400,0.18400,0.55600}%
	\definecolor{mycolor5}{rgb}{0.46600,0.67400,0.18800}%
	\definecolor{mycolor6}{rgb}{0.30100,0.74500,0.93300}%
	\caption{Cross section of $\bs{g}^{(k)} = \sum_{\ell=0}^{L}\chlvl{\ell}{\Lmax}\YMC_{\ell,n_\ell}$ (\ref{tikz:g_total}) and of contributions $\chlvl{\ell}{\Lmax}\YMC_{\ell,n_\ell}$ at levels $0, \ldots, L$ (\ref{tikz:g_0},\ref{tikz:g_1},\ref{tikz:g_2},\ref{tikz:g_3},\ref{tikz:g_4},\ref{tikz:g_5}) for Problem 3.}
	\label{fig:exp3/g_lvl}
\end{figure}

\section{Conclusions and further work}

We presented a MLMC method for solving the robust optimization problem for a tracking type cost functional. Including an additional penalty on the variance of the state allows to also solve the average control problem in the same framework. It has been shown that correlations between a limited number of samples of the gradient are hard to avoid if efficiency and correct error estimation are desired. The classical MLMC theory was extended to be able to deal with these samples. Since the correlations are usually negative, the usage of correlated samples turns out to reduce the number of samples required. 

The MLMC method proves to be orders of magnitude more efficient compared to the regular MC method, which takes all samples on the finest level. In our experiments, The NCG method is usually more performant than the Hessian based method, but not always. The performance depends on the specific problem considered. The Hessian based method may be better suited for a small number of optimization variables. The Newton equation could then be solved directly. 
The method was tested in this paper on simple academic model problems. Its performance for realistic applications has not yet been investigated.

Methods for optimization under uncertainties are often born out of previous research in simulation and uncertainty quantification of stochastic problems. 
In that context, the quasi-Monte Carlo (QMC) method \cite{graham2011, dick2013high} and its multilevel \cite{kuo2015multi, kuo2017multilevel} and multi-index \cite{robbe2016multi} variants have been shown to reduce the cost for a given RMSE $\epsilon$ from $\mathcal{O}(\epsilon^{-2})$ to, in ideal circumstances, $\mathcal{O}(\epsilon^{-1})$. Hence, the use of QMC is expected to reduce the complexity of (\ref{eq:tau_cost}) even further. The algorithmic details and the numerical evidence remain to be investigated. 



\appendix
\section{Gradient of $\Vest{1}$}
\label{app:derivative}
This section we provide the details of the derivation of the gradient of the variance estimator $\Vest{1}$, i.e., we show how to arrive at equation (\ref{eq:grad_V1}).
Note that $\| \sqrt{\bs{v}}\|^2 = (\bs{v},\bs{1})$, such that
\begin{align*}
\diff{}{\bs{y}} \Bigg\| \sqrt{\frac{1}{2n}\sum_{j=1}^{n}(\bs{y}_j - \bs{y}_{j-1})^2} \Bigg\|^2 [\bs{h}] 
&= \frac{1}{2n} \diff{}{\bs{y}} (\sum_{j=1}^{n}(\bs{y}_j - \bs{y}_{j-1})^2, \bs{1}) [\bs{h}].
\end{align*}
For simplicity, let $\bs{y}_i = \bs{y}_{n+i}$ and $\bs{h}_i = \bs{h}_{n+i}$. Using, e.g., the limit definition of the derivative yields
\begin{align*}
\frac{1}{2n}\diff{}{\bs{y}} (\sum_{j=1}^{n}(\bs{y}_j - \bs{y}_{j-1})^2, \bs{1}) [\bs{h}] 
& = \frac{1}{n}(\sum_{j=1}^{n}(\bs{y}_j - \bs{y}_{j-1})(\bs{h}_j - \bs{h}_{j-1}), \bs{1}) \\
& = \frac{1}{n}(\sum_{j=1}^{n}\bs{y}_j\bs{h}_j + \sum_{j=1}^{n}\bs{y}_{j-1}\bs{h}_{j-1} 
- \sum_{j=1}^{n}\bs{y}_j\bs{h}_{j-1} - \sum_{j=1}^{n}\bs{y}_{j-1}\bs{h}_j, \bs{1}) \\
& = \frac{1}{n}(\sum_{j=1}^{n}\bs{y}_j\bs{h}_j + \sum_{j=1}^{n}\bs{y}_{j}\bs{h}_{j} 
- \sum_{j=1}^{n}\bs{y}_{j+1}\bs{h}_{j} - \sum_{j=1}^{n}\bs{y}_{j-1}\bs{h}_j, \bs{1}) \\
& = \frac{1}{n}(\sum_{j=1}^{n}(2\bs{y}_j - \bs{y}_{j-1} - \bs{y}_{j+1})\bs{h}_j, \bs{1}) \\
& = (2\bs{y} - \bs{y}_{-1} - \bs{y}_{+1}, \bs{h})_{D,\Omega_0}
\end{align*}
Therefore, the gradient of $\Vest{1}$ w.r.t. $(.,.)_{D,\Omega_0}$ is equal to $2\bs{y} - \bs{y}_{-1} - \bs{y}_{+1}$.


\bibliographystyle{siamplain}
\bibliography{bib_avb}

\begin{thebibliography}{10}

\bibitem{Ali2016}
{\sc A.~A. Ali, E.~Ullmann, and M.~Hinze}, {\em Multilevel {M}onte {C}arlo
  analysis for optimal control of elliptic {PDE}s with random coefficients},
  SIAM/ASA Journal on Uncertainty Quantification, 5 (2017), pp.~466--492.

\bibitem{babuvska2010stochastic}
{\sc I.~Babu{\v{s}}ka, F.~Nobile, and R.~Tempone}, {\em A stochastic
  collocation method for elliptic partial differential equations with random
  input data}, SIAM review, 52 (2010), pp.~317--355.

\bibitem{babuska2004galerkin}
{\sc I.~Babu{\v{s}}ka, R.~Tempone, and G.~E. Zouraris}, {\em {G}alerkin finite
  element approximations of stochastic elliptic partial differential
  equations}, SIAM Journal on Numerical Analysis, 42 (2004), pp.~800--825.

\bibitem{borzi2009multigrid}
{\sc A.~Borz{\`\i} and V.~Schulz}, {\em Multigrid methods for {PDE}
  optimization}, SIAM Review, 51 (2009), pp.~361--395.

\bibitem{Borzi2012}
{\sc A.~Borz{\`\i} and V.~Schulz}, {\em Computational Optimization of Systems
  Governed by Partial Differential Equations}, SIAM, Philadelphia, 2012.

\bibitem{Borzi2010}
{\sc A.~Borz{\`\i}, V.~Schulz, C.~Schillings, and G.~Von~Winckel}, {\em On the
  treatment of distributed uncertainties in {PDE}-constrained optimization},
  GAMM-Mitteilungen, 33 (2010), pp.~230--246.

\bibitem{borzi2009VWmultigrid}
{\sc A.~Borz{\`\i} and G.~von Winckel}, {\em Multigrid methods and sparse-grid
  collocation techniques for parabolic optimal control problems with random
  coefficients}, SIAM Journal on Scientific Computing, 31 (2009),
  pp.~2172--2192.

\bibitem{borzi2011pod}
{\sc A.~Borz{\`\i} and G.~von Winckel}, {\em A {POD} framework to determine
  robust controls in {PDE} optimization}, Computing and visualization in
  science, 14 (2011), pp.~91--103.

\bibitem{chen2014weighted}
{\sc P.~Chen and A.~Quarteroni}, {\em Weighted reduced basis method for
  stochastic optimal control problems with elliptic {PDE} constraint}, SIAM/ASA
  Journal on Uncertainty Quantification, 2 (2014), pp.~364--396.

\bibitem{cliffe2011}
{\sc K.~A. Cliffe, M.~B. Giles, R.~Scheichl, and A.~L. Teckentrup}, {\em
  Multilevel {M}onte {C}arlo methods and applications to elliptic {PDE}s with
  random coefficients}, Computing and Visualization in Science, 14 (2011),
  p.~3.

\bibitem{Dai1999}
{\sc Y.-H. Dai and Y.~Yuan}, {\em A nonlinear conjugate gradient method with a
  strong global convergence property}, SIAM Journal on optimization, 10 (1999),
  pp.~177--182.

\bibitem{dick2013high}
{\sc J.~Dick, F.~Y. Kuo, and I.~H. Sloan}, {\em High-dimensional integration:
  the quasi-{M}onte {C}arlo way}, Acta Numerica, 22 (2013), pp.~133--288.

\bibitem{dietrich1997}
{\sc C.~R. Dietrich and G.~N. Newsam}, {\em Fast and exact simulation of
  stationary {G}aussian processes through circulant embedding of the covariance
  matrix}, SIAM Journal on Scientific Computing, 18 (1997), pp.~1088--1107.

\bibitem{ghanem2003}
{\sc R.~G. Ghanem and P.~D. Spanos}, {\em Stochastic Finite Elements: A
  Spectral Approach}, Courier Corporation, Mineola, NY, 2003.

\bibitem{giles2015}
{\sc M.~B. Giles}, {\em Multilevel {M}onte {C}arlo methods}, Acta Numerica, 24
  (2015), pp.~259--328.

\bibitem{graham2011}
{\sc I.~G. Graham, F.~Y. Kuo, D.~Nuyens, R.~Scheichl, and I.~H. Sloan}, {\em
  Quasi-{M}onte {C}arlo methods for elliptic {PDE}s with random coefficients
  and applications}, Journal of Computational Physics, 230 (2011),
  pp.~3668--3694.

\bibitem{karhunen1947ueber}
{\sc K.~Karhunen}, {\em {\"U}ber lineare methoden in der
  wahrscheinlichkeitsrechnung}, Annales Academiae scientiarum Fennicae. Series
  A. 1, Mathematica-physica, 37 (1947), pp.~1--79.

\bibitem{Kouri2014}
{\sc D.~P. Kouri}, {\em A multilevel stochastic collocation algorithm for
  optimization of {PDE}s with uncertain coefficients}, SIAM/ASA Journal on
  Uncertainty Quantification, 2 (2014), pp.~55--81.

\bibitem{kouri2013trust}
{\sc D.~P. Kouri, M.~Heinkenschloss, D.~Ridzal, and B.~G. van
  Bloemen~Waanders}, {\em A trust-region algorithm with adaptive stochastic
  collocation for {PDE} optimization under uncertainty}, SIAM Journal on
  Scientific Computing, 35 (2013), pp.~A1847--A1879.

\bibitem{kouri2016risk}
{\sc D.~P. Kouri and T.~M. Surowiec}, {\em Risk-averse {PDE}-constrained
  optimization using the conditional value-at-risk}, SIAM Journal on
  Optimization, 26 (2016), pp.~365--396.

\bibitem{kuo2017multilevel}
{\sc F.~Y. Kuo, R.~Scheichl, C.~Schwab, I.~H. Sloan, and E.~Ullmann}, {\em
  Multilevel quasi-{M}onte {C}arlo methods for lognormal diffusion problems},
  Mathematics of Computation, 86 (2017), pp.~2827--2860.

\bibitem{kuo2015multi}
{\sc F.~Y. Kuo, C.~Schwab, and I.~H. Sloan}, {\em Multi-level quasi-{M}onte
  {C}arlo finite element methods for a class of elliptic {PDE}s with random
  coefficients}, Foundations of Computational Mathematics, 15 (2015),
  pp.~411--449.

\bibitem{lee2017comparison}
{\sc H.-C. Lee and M.~D. Gunzburger}, {\em Comparison of approaches for random
  {PDE} optimization problems based on different matching functionals},
  Computers \& Mathematics with Applications, 73 (2017), pp.~1657--1672.

\bibitem{lee2013}
{\sc H.-C. Lee and J.~Lee}, {\em A stochastic {G}alerkin method for stochastic
  control problems}, Communications in Computational Physics, 14 (2013),
  pp.~77--106.

\bibitem{lewis2005}
{\sc R.~M. Lewis and S.~G. Nash}, {\em Model problems for the multigrid
  optimization of systems governed by differential equations}, SIAM Journal on
  Scientific Computing, 26 (2005), pp.~1811--1837.

\bibitem{loeve1946fonctions}
{\sc M.~Lo{\`e}ve}, {\em Fonctions al{\'e}atoires de second ordre}, Revue
  Scientifique,  (1946), pp.~195--206.

\bibitem{martin2012stochastic}
{\sc J.~Martin, L.~C. Wilcox, C.~Burstedde, and O.~Ghattas}, {\em A stochastic
  newton {MCMC} method for large-scale statistical inverse problems with
  application to seismic inversion}, SIAM Journal on Scientific Computing, 34
  (2012), pp.~A1460--A1487.

\bibitem{nash2000}
{\sc S.~G. Nash}, {\em A multigrid approach to discretized optimization
  problems}, Optimization Methods and Software, 14 (2000), pp.~99--116.

\bibitem{nazareth2009}
{\sc J.~Nazareth}, {\em Conjugate gradient method}, Wiley Interdisciplinary
  Reviews: Computational Statistics, 1 (2009), pp.~348--353.

\bibitem{nobile2008sparse}
{\sc F.~Nobile, R.~Tempone, and C.~G. Webster}, {\em A sparse grid stochastic
  collocation method for partial differential equations with random input
  data}, SIAM Journal on Numerical Analysis, 46 (2008), pp.~2309--2345.

\bibitem{peypouquet2015}
{\sc J.~Peypouquet}, {\em Convex optimization in normed spaces: theory, methods
  and examples}, Springer, Cham, 2015.

\bibitem{robbe2016multi}
{\sc P.~Robbe, D.~Nuyens, and S.~Vandewalle}, {\em A multi-index quasi-{M}onte
  {C}arlo algorithm for lognormal diffusion problems}, SIAM Journal on
  Scientific Computing,  (2017).

\bibitem{Rosseel2012}
{\sc E.~Rosseel and G.~N. Wells}, {\em Optimal control with stochastic {PDE}
  constraints and uncertain controls}, Computer Methods in Applied Mechanics
  and Engineering, 213 (2012), pp.~152--167.

\bibitem{teckentrup2013further}
{\sc A.~L. Teckentrup, R.~Scheichl, M.~B. Giles, and E.~Ullmann}, {\em Further
  analysis of multilevel monte carlo methods for elliptic pdes with random
  coefficients}, Numerische Mathematik, 125 (2013), pp.~569--600.

\bibitem{tiesler2012stochastic}
{\sc H.~Tiesler, R.~M. Kirby, D.~Xiu, and T.~Preusser}, {\em Stochastic
  collocation for optimal control problems with stochastic {PDE} constraints},
  SIAM Journal on Control and Optimization, 50 (2012), pp.~2659--2682.

\bibitem{trefethen1997}
{\sc L.~N. Trefethen and D.~Bau~III}, {\em Numerical {L}inear {A}lgebra},
  vol.~50, SIAM, Philadelphia, 1997.

\bibitem{troltzsch2010}
{\sc F.~Tr{\"o}ltzsch}, {\em Optimal Control of Partial Differential
  Equations}, American Mathematical Society, Providence, RI, 2010.

\bibitem{xiu2005high}
{\sc D.~Xiu and J.~S. Hesthaven}, {\em High-order collocation methods for
  differential equations with random inputs}, SIAM Journal on Scientific
  Computing, 27 (2005), pp.~1118--1139.

\bibitem{xiu2003modeling}
{\sc D.~Xiu and G.~E. Karniadakis}, {\em Modeling uncertainty in flow
  simulations via generalized polynomial chaos}, Journal of computational
  physics, 187 (2003), pp.~137--167.

\end{thebibliography}
\end{document}